\theoremstyle{plain}
\newtheorem{thm}{Theorem}[section]
\newtheorem{lemma}[thm]{Lemma}
\newtheorem{cor}[thm]{Corollary}
\newtheorem{prop}[thm]{Proposition}
\theoremstyle{definition}
\newtheorem{Open questions}[thm]{Open questions}
\newtheorem{Open question}[thm]{Open question}
\newtheorem{Open problems}[thm]{Open problems}
\newtheorem{Open problem}[thm]{Open problem}
\newcommand{\tc}[2]{\textcolor{#1}{#2}}
\definecolor{dmagenta}{rgb}{.5,0,.5} 
\definecolor{dred}{rgb}{.5,0,0} 
\definecolor{dgreen}{rgb}{0,.5,0} 
\definecolor{dblue}{rgb}{0,0,0.5} 
\definecolor{black}{rgb}{0,0,0} 
\definecolor{vdgreen}{rgb}{0,.3,0} 
\definecolor{vdred}{rgb}{.3,0,0} 
\definecolor{red}{rgb}{1,0,0} \newcommand{\red}[1]{\tc{red}{#1}}
\definecolor{gray}{rgb}{.5,.5,.5}
\def\Bbb{\mathbb}
\def\bar{\overline}
\def\Z{\Bbb{Z}}
\def\N{\Bbb{N}}
\def\Reals{\Bbb{R}}
\def\ni{\noindent}
\def\Dist{\hbox{\rm Dist}}
\def\F+L{\hbox{$\textup{F}\!_+\textup{L}$}}
\renewcommand{\H}{\mathcal{H}}
\def\ms{\medskip}
\def\onto{{\kern3pt\to\kern-8pt\to\kern3pt}}
\def\<{\langle}
\def\>{\rangle}
\def\|{{\ |\ }}
\newcommand{\set}[1]{\left\{#1\right\}}
\renewcommand{\ni}{\noindent}
\renewcommand{\ms}{\medskip}
\newcommand{\bs}{\bigskip}
\def\*{^{\star}}
\begin{document}

\title{Hyperbolic hydra}

\author{N.\ Brady,  W.\ Dison  and T.R.\ Riley}

\thanks{The first author acknowledges partially supported from NSF grant DMS--0906962.}

\date \today

\begin{abstract}
\ni We give examples of hyperbolic groups with finite--rank free subgroups of huge (Ackermannian) distortion.   
 
 \ms

\footnotesize{\ni \textbf{2010 Mathematics Subject
Classification:  20F65, 20F10, 20F67}  \\ \ni \emph{Key words and phrases:} hyperbolic group, subgroup distortion,  hydra, Ackermann's function}
\end{abstract}

\maketitle

\section{Introduction} \label{intro}

\subsection{Our result.} Hyperbolic groups  are algorithmically tractable (their word and conjugacy problem are straight--forward) and have the tree--like property that geodesic triangles in their  Cayley graphs are close to tripods  \cite{BrH, Gromov4}.  The purpose of  this article is to show that none--the--less some harbour extreme wildness  within their subgroups ---  their finite--rank free subgroups, even.   We prove (the terminology is explained below) ---
 

\begin{thm} \label{Main}
For all $k \geq 1$ there is a hyperbolic group $\Gamma_k$ with a free  rank--$(k+18)$  subgroup $\Lambda_k$  whose distortion is   $\Dist^{\Gamma_k}_{\Lambda_k}  \succeq  A_k$ --- that is, it grows at least as fast as the $k$--th of the family of Ackermann functions.  
 \end{thm}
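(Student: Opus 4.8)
The plan is to use the hydra groups of Dison--Riley as a combinatorial engine and to graft that engine onto a \emph{hyperbolic} free-by-cyclic group. Recall that the hydra automorphism $\theta$ of the free group $F_k=\langle a_1,\dots,a_k\rangle$, given by $\theta(a_1)=a_1$ and $\theta(a_i)=a_ia_{i-1}$ for $i\geq 2$, has a mapping torus $F_k\rtimes_\theta\mathbb{Z}$ in which a suitable free subgroup (generated by the elements $a_it$) is distorted at the Ackermannian rate $A_k$; this is precisely the output of the Hercules--hydra battle. The obstruction to hyperbolicity is that this mapping torus is \emph{never} hyperbolic: $\theta$ fixes $a_1$ and is unipotent, so $\langle a_1,t\rangle\cong\mathbb{Z}^2$ is a flat. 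The whole task is therefore to reproduce the Ackermannian blow-up while destroying every such flat, i.e.\ to replace $\theta$ by an \emph{atoroidal} automorphism whose iterates still drive the hydra recursion on the distinguished elements.

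First I would enlarge the alphabet, adjoining a bounded number (eighteen, after optimisation) of auxiliary generators $p_1,\dots,p_{18}$ to form $F_N=F(a_1,\dots,a_k,p_1,\dots,p_{18})$ with $N=k+18$, and extend $\theta$ to an automorphism $\phi$ of $F_N$. The essential modification is to break the fixedness of $a_1$: instead of $\phi(a_1)=a_1$ one sets, say, $\phi(a_1)=a_1p_1$, and one chooses $\phi$ on the $p_j$ so that $\phi$ has no periodic conjugacy classes (ideally so that $\phi$ is fully irreducible). Because the cascade $a_i\mapsto a_ia_{i-1}\mapsto\cdots$ still terminates in $a_1$, whose images now grow rather than staying fixed, the hydra recursion is preserved, indeed amplified. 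I then set $\Gamma_k:=F_N\rtimes_\phi\mathbb{Z}=\langle a_1,\dots,a_k,p_1,\dots,p_{18},t\mid t^{-1}xt=\phi(x)\rangle$ and take $\Lambda_k$ to be the rank-$(k+18)$ free subgroup generated by $\{a_1t,\dots,a_kt,p_1t,\dots,p_{18}t\}$, freeness being established exactly as in the hydra case via a ping-pong/$t$-exponent argument.

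For the lower bound I would run the Hercules--hydra battle. For each $n$ the machinery of Dison--Riley produces an element $g_n\in\Lambda_k$ recording the battle on the hydra $a_k^n$, whose reduced length in the free generators of $\Lambda_k$ is at least $A_k(n)$, since the number of Hercules moves is the hydra function, which dominates the $k$-th Ackermann function. On the other hand $g_n$ has a short expression in $\Gamma_k$: conjugation by powers of $t$ collapses the iterated $\phi$-images, as $t^{-m}xt^m=\phi^m(x)$, so $g_n$ is represented by a word of length $O(n)$ in $a_1,\dots,a_k,p_1,\dots,p_{18},t$. Hence $|g_n|_{\Gamma_k}=O(n)$ while $|g_n|_{\Lambda_k}\geq A_k(n)$, giving $\Dist^{\Gamma_k}_{\Lambda_k}\succeq A_k$. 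The content here is transporting the Dison--Riley length estimates across the modification of $\theta$ into $\phi$, checking that adjoining the $p_j$ and unfreezing $a_1$ does not push these lengths below the Ackermann threshold.

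The main obstacle is proving that $\Gamma_k$ is hyperbolic. I would invoke the criterion (Brinkmann, proved via the Bestvina--Feighn combination theorem and its hallways-flare/annuli condition) that $F_N\rtimes_\phi\mathbb{Z}$ is hyperbolic precisely when $\phi$ is atoroidal. Everything thus reduces to engineering the auxiliary generators $p_1,\dots,p_{18}$ and the values $\phi(a_1),\phi(p_j)$ so that no nontrivial conjugacy class is periodic, while the forward iterates of $\phi$ still realise the hydra cascade on $a_1,\dots,a_k$. This is genuinely delicate, since the hydra part of $\phi$ is unipotent and on its own produces the periodic classes responsible for the flats above, so the $p_j$-part must inject enough exponential mixing to flare every direction without interfering with the recursion that creates the distortion. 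Verifying atoroidality concretely --- for instance via a train-track representative with no periodic Nielsen paths, or by a direct flaring estimate for the associated tree of spaces --- is where the real work lies, and it is what forces the eighteen extra generators. Once hyperbolicity is secured, the lower bound of the previous step completes the proof.
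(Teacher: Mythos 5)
Your high--level strategy --- keep the hydra cascade, unfreeze $a_1$, adjoin boundedly many auxiliary letters, and certify hyperbolicity of the mapping torus --- is exactly the strategy of the paper, but your write--up defers precisely the steps that constitute the theorem's content, and one step is wrong as stated. On hyperbolicity: you reduce everything to engineering $\phi$ atoroidal and invoking Brinkmann's criterion, but you never define $\phi$ on the $p_j$, never specify $\phi(a_1)$ beyond a guess, and never verify atoroidality; you say yourself this is ``where the real work lies.'' Until that is done, no group $\Gamma_k$ exists and nothing is proved. The difficulty is real: the hydra part of $\phi$ is unipotent, and a polynomially growing automorphism always carries periodic conjugacy classes, so the auxiliary part must inject exponential growth into \emph{every} orbit of conjugacy classes, including those of the $a_i$, without disturbing the recursion --- and a train--track or flaring verification of this is comparable in difficulty to the paper's actual argument. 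The paper, for what it is worth, does not go through atoroidality at all: it sets $\theta(a_1)=a_0$ and $\theta(a_0)=ua_1v$ for explicit words $u,v$ in seventeen auxiliary generators $b_1,\ldots,b_8,c_1,\ldots,c_8,d$ (two Barnard--Brady pieces glued as in Mecham--Muckerjee), and proves hyperbolicity by passing to an equivalent presentation whose presentation $2$--complex is $\textup{CAT}(0)$ --- the link condition is checked edge by edge --- and then ruling out flat planes by inspecting which link edges lie on simple loops of length $2\pi$, so that the Flat Plane Theorem applies. This hands--on route is what makes the specific choice of $u$, $v$ and the count of auxiliary letters forced rather than ``after optimisation.''

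On the distortion lower bound there is a concrete error: the battle element $g_n$ equals ${a_k}^n t^{\mathcal{H}_k(n)}$ in the group, so its extrinsic length is about $\mathcal{H}_k(n)$, not $O(n)$; conjugation by powers of $t$ does not shorten it, since the power of $t$ is part of the element itself. The fix (in Dison--Riley and in the paper) is to use instead the genuinely short element ${a_k}^n a_2 t a_1 {a_2}^{-1} {a_k}^{-n}$, of length $2n+4$, and to show its reduced spelling in the basis of $\Lambda_k$ has length at least $2\mathcal{H}_k(n)+3$. That requires two things you only assert: (i) freeness of $\Lambda_k$ on the stated generators, which the paper proves by an induction on $k$ (shuffling the $t^{\pm1}$ through a word, together with a positivity corollary that is itself used inside the induction) --- your ``ping--pong/$t$--exponent argument'' would have to cope with the new letters and with $\theta(a_0)=ua_1v$ not being basis--preserving; and (ii) a projection argument --- delete all auxiliary letters, replace $a_0^{\pm1}$ by $a_1^{\pm1}$, freely reduce --- showing the unique reduced spelling still records all $\mathcal{H}_k(n)$ decapitations, so that the intrinsic length really is at least $2\mathcal{H}_k(n)+3$ rather than being undercut by new spellings through the $p_j$. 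Without an explicit $\phi$, a hyperbolicity proof, and points (i)--(ii), the proposal is a correct road map but not a proof.
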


A \emph{distortion function} measures the degree to which a subgroup  folds in on itself within the ambient group.  Suppose $H$ is a subgroup with  generating set $T$ inside a group $G$ with finite generating set $S$.  Then $\Dist^{G}_{H} : \N \to \N$ compares the intrinsic word metric $d_T$ on $H$ with the extrinsic word metric $d_S$: 
$$\Dist^{G}_{H}(n) \ := \  \max \set{  \  d_T(1,g)  \  \mid  \   g \in {H} \textup{ with }  d_S(1,g)  \leq n   \ }.$$
Up to the following equivalence, capturing qualitative agreement of growth rates, $\Dist^{G}_{H}$ does not depend on $S$ and $T$.   For $f,g:\N \to \N$, we write $f \preceq g$ when there exists $C>0$ such that $f(n) \leq Cg(Cn+C)+Cn+C$ for all $n$.  Defining $f \simeq g$ when $f \preceq g$ and $g \preceq f$.  

 Ackermann's $A_k  : \N \to \N$ are a family of fast--growing functions defined recursively:  
\begin{align*}
A_0(n) & \ = \ n+2    \textup{ for }    n \geq 0,   \\
A_k(0) & \ =  \
\begin{cases}
0 & \textup{ for }   k = 1  \\
1 & \textup{ for }   k \geq 2,
\end{cases}
     \\
  \ \ \ \text{ and }  \ \ \
A_{k+1}(n+1) & \  =  \  A_k({A_{k+1}}(n))  \textup{ for }   k,n  \geq 0.
\end{align*}
In particular, $A_1(n)=2n$,  $A_2(n)=2^n$ and $A_3(n)$ is the $n$--fold iterated power of $2$.  They are representatives of the successive levels of the   Grzegorczyk hierarchy of primitive recursive functions --- see, for example, \cite{Rose}.    
 
\subsection{Background}  Other heavily distorted free subgroups of hyperbolic groups have been exhibited by Mitra~\cite{Mitra}:   for all $k$, he gives an example with  a free subgroup of  distortion like a $k$--fold iterated exponential function and, more extreme,  an example where the number of iterations grows like $\log n$. Barnard, the first author, and Dani developed Mitra's constructions into more explicit examples that are also $\textup{CAT}(-1)$  \cite{BBD}.  
 We are not aware of any example of a hyperbolic group with a  finite--rank free subgroup  of distortion exceeding that of our examples.   Indeed, we we are not even aware of a hyperbolic group with a \emph{finitely presented} subgroup of greater distortion.    The Rips construction, applied to a finitely presentable group with unsolvable word problem yields a hyperbolic (in fact, $C'(1/6)$ small--cancellation) group $G$ with a finitely generated subgroup $N$ such that $\Dist^G_N$ is not bounded above by any recursive function, but these $N$ are not finitely presentable.   (See  \cite[\S3.4]{AO}, \cite[Corollary~8.2]{Farb}, \cite[\S3, $3.K_3''$]{Gromov} and \cite{PittetThesis}.)



\bs

\subsection{An outline of our approach}  
Our groups $\Gamma_k$  are elaborations of the \emph{hydra groups} 
\begin{equation*} G_k \ = \ \langle \  a_1, \ldots, a_k,  t \  \mid \  t^{-1} {a_1} t=a_1, \ t^{-1}{a_i} t=a_i a_{i-1} \ (\forall i>1)   \ \rangle
\end{equation*}
explored by the second and third authors in \cite{DR} --- groups that are $\textup{CAT}(0)$, free--by--cyclic,  biautomatic, and can be presented with only one relator, and yet the subgroups  $H_k :=  \langle a_1t, \ldots, a_kt \rangle$ are free of rank $k$ and their  distortion grows like the $k$--th of Ackermann's functions: $\Dist^{G_k}_{H_k}  \simeq  A_k$.  

This extreme distortion stems from a phenomenon which can be described in terms of a  re--imagining  of Hercules' battle with the  Lernaean Hydra.  A \emph{hydra} is a positive word $w$ on the alphabet $a_1, a_2, \ldots$.  Hercules  removes the first letter and then the creature regenerates in that each remaining $a_i$ with $i > 1$  becomes $a_ia_{i-1}$ and each remaining $a_1$ is unaffected.  This repeats and Hercules triumphs when the hydra is  reduced to the empty word.  The number of steps is denoted $\H(w)$.  (Each step encompasses  the removal of the first letter and then regeneration.) 
For example, $\H({a_2}^3)=7$ ---
$${a_2}^3   \ \to \      (a_2a_1)^2  \ \to \      a_1 a_2  {a_1}^2  \  \to \     a_2 {a_1}^3 \  \to \    {a_1}^3    \  \to \  {a_1}^2  \  \to \  {a_1}   \  \to \   \varepsilon.$$
In \cite{DR} it is shown that Hercules will be victorious whatever  hydra he faces, but the number of strikes it takes can be huge: the functions $\mathcal{H}_k$ defined by $\mathcal{H}_k(n) = \H({a_k}^n)$ grow like Ackermann's functions: $\mathcal{H}_k \simeq A_k$.

The group $G_k$ is not hyperbolic because it has the subgroup   $\langle a_1, t \rangle \cong \Z^2$.  We obtain $\Gamma_k$ by  combining $G_k$ with another free--by--cyclic group, which is hyperbolic, in such a way that the interaction between the two occurs in a pair of relations that replace $t^{-1} {a_1} t=a_1$.     This will allow the hydra effect to persist in $\Gamma_k$.

Like $G_k$, the group $\Gamma_k$ is free--by--cyclic and will be presented as such.   We will prove it is hyperbolic by translating to an alternative presentation for which the associated Cayley 2--complex is $\textup{CAT}(0)$, and then we will argue that there are no isometrically embedded copies of $\Reals^2$ and appeal to the Flat Plane Theorem.    

Whilst we will not call on it in this paper (as we will give the translation between the presentations  explicitly), a result that lies behind how we came to our examples  is that if a 2--complex admits an $S^1$--valued Morse function all of whose   ascending and  descending links are trees, then its fundamental group is free--by--cyclic  \cite{BaBr}.   [The ascending  link for our examples is visible in Figure~\ref{Link} as the subgraph  made up of all edges connecting pairs of negative vertices.  The descending is that made up of all edges connecting pairs of positive vertices.  Both are trees.]

\ms

\subsection{The (lack of) upper bound on the distortion}

It seems likely that $\Dist^{\Gamma_k}_{\Lambda_k}  \simeq  A_k$, but we do not offer a proof that $\Dist^{\Gamma_k}_{\Lambda_k}  \preceq  A_k$.  
The proof in \cite{DR} that  $\Dist^{G_k}_{H_k} \preceq A_k$ in \cite{DR} may guide a proof that  $\Dist^{\Gamma_k}_{\Lambda_k}  \preceq  A_k$, but that proof is technical and how to carry it over to $\Dist^{\Gamma_k}_{\Lambda_k}$ is not readily apparent.  We are content to present here just the lower bound (which we believe is the more significant).

\subsection{The organisation of this article.}
In Section~\ref{our examples} we give two presentations for  $\Gamma_k$ and prove they are equivalent.  In Section~\ref{hyperbolicity} we use the link condition to show that the Cayley 2--complex associated to one of these presentations is  $\textup{CAT}(0)$ and then the Flat Plane Theorem   to establish hyperbolicity of $\Gamma_k$.  The other presentation  places $\Gamma_k$ in a class of free--by--cyclic groups which are the subject of Section~\ref{freeness and distortion}: we show that for $k \geq 2$, they contain free subgroups of rank $k+18$ and distortion $\simeq A_k$.   (In the case $k=1$, Theorem~\ref{Main} is elementary: take $\Gamma_1$ to be a free group and $\Lambda_1$ to be $\Gamma_1$.)



\section{Our examples} \label{our examples}

\subsection{A $\textup{CAT}(0)$ presentation for $\Gamma_k$.}   This presentation $P_k$ is well suited to establishing hyperbolicity (see Section~\ref{hyperbolicity}) --- 

\qquad generators: \qquad  $\alpha_1, \ldots, \alpha_k, \  \beta_1, \ldots, \beta_8,  \ \gamma_1, \ldots, \gamma_8, \ \sigma, \tau,$ 

\qquad relations: \vspace*{-5mm}
$$\begin{array}{rlr}
  \rule{0mm}{5mm} {\alpha_{i}}^{-1} \tau \alpha_{i}  \  = &   \!\! \alpha_{i-1}  \ \ (1 < i \leq  k), &    \\ 
  {\beta_{i}}^{-1} \tau \beta_{i} \   = &  \!\!  \beta_{i+1}  \ \ (1 \leq i \leq 7),   \quad  {\beta_{8}}  \tau  {\beta_{8}}^{-1} & \!\!\!\!  =  \   \beta_{1}, \\ 
  {\gamma_{i}}^{-1} \sigma \gamma_{i}  \ = & \!\!   \gamma_{i+1} \ \ (1 \leq i \leq 7),   \quad  {\gamma_{8}}  \sigma  {\gamma_{8}}^{-1} & \!\!\!\!   = \  \gamma_{1}, \\ 
  \gamma_3 \beta_5 \ = & \!\!  \beta_3 \gamma_5, \ \  \ \alpha_1 \gamma_1 \tau  \ = \ \tau \gamma_7 \alpha_1. \\
\end{array}$$ 

One can be encode $P_k$ in diagrams such as in Figure~\ref{LOTs} which gives the case $k=6$.    
Each edge in the  three labelled oriented trees (\emph{LOT}s --- see \cite{Howie}) encodes a  commutator relation ---  an edge labelled $y$ from a vertex labelled $x$ to a vertex labelled $z$ corresponds to a relation $y^{-1}xy=z$.   The square and hexagonal  2--cells represent the remaining two relations, $\gamma_3 \beta_5  =    \beta_3 \gamma_5$ and $\alpha_1 \gamma_1 \tau    =  \tau \gamma_7 \alpha_1.$  
 
 \begin{figure}[ht]
\psfrag{a}{\footnotesize{$\beta_1$}}
\psfrag{b}{\footnotesize{$\beta_2$}}
\psfrag{c}{\footnotesize{$\beta_3$}}
\psfrag{d}{\footnotesize{$\beta_4$}}
\psfrag{e}{\footnotesize{$\beta_5$}}
\psfrag{f}{\footnotesize{$\beta_6$}}
\psfrag{g}{\footnotesize{$\beta_7$}}
\psfrag{h}{\footnotesize{$\beta_8$}}
 \psfrag{s}{\footnotesize{$\sigma$}}
 \psfrag{t}{\footnotesize{$\tau$}}
 \psfrag{A}{\footnotesize{$\gamma_1$}}
\psfrag{B}{\footnotesize{$\gamma_2$}}
\psfrag{C}{\footnotesize{$\gamma_3$}}
\psfrag{D}{\footnotesize{$\gamma_4$}}
\psfrag{E}{\footnotesize{$\gamma_5$}}
\psfrag{F}{\footnotesize{$\gamma_6$}}
\psfrag{G}{\footnotesize{$\gamma_7$}}
\psfrag{H}{\footnotesize{$\gamma_8$}}
 \psfrag{1}{\footnotesize{$\alpha_1$}}
\psfrag{2}{\footnotesize{$\alpha_2$}}
\psfrag{3}{\footnotesize{$\alpha_3$}}
\psfrag{4}{\footnotesize{$\alpha_4$}}
\psfrag{5}{\footnotesize{$\alpha_5$}}
\psfrag{6}{\footnotesize{$\alpha_6$}}
 \centerline{\epsfig{file=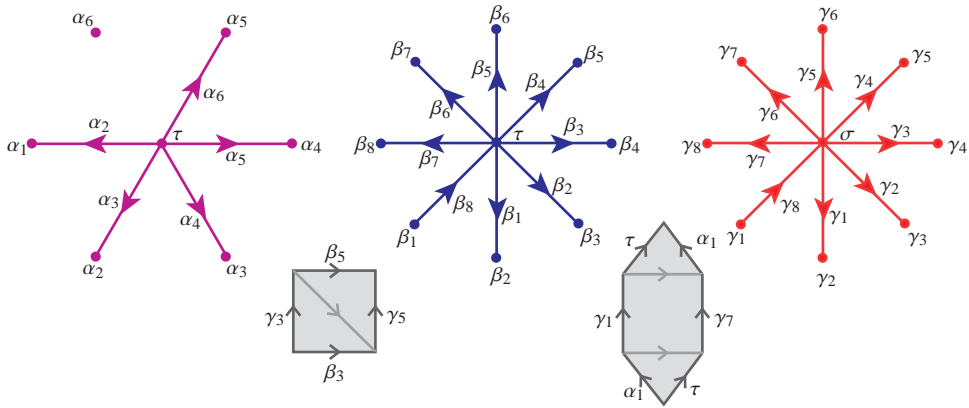}} \caption{The defining relations of the presentation $P_6$ for $\Gamma_6$ displayed as three  LOTs and two 2--cells.} \label{LOTs}
\end{figure}

 If one removes the $\alpha_i$ and all the relations in which they appear from  $P_k$, then one essentially gets groups  studied by Mecham \& Muckerjee in \cite{MeMu}.  These, in turn, are built from two copies of groups studied by Barnard and the first author in \cite{BaBr}.

\subsection{A presentation of $\Gamma_k$ as a  free--by--cyclic group.} \label{Q presentation}  This presentation $Q_k$  has --- 

\qquad  generators: \qquad $a_0, \ldots, a_k, \  b_1, \ldots, b_8,  \ c_1, \ldots, c_8, \   d, \ t,$

\qquad  relations:  
$$\begin{array}{rllll}
 t^{-1}a_it    & \!\!  =   \    \theta(a_i) & \quad 0 \leq i \leq k, \qquad &      t^{-1}c_it    &  \!\!  = \     w^{-1} \psi(c_i) w \qquad \ 1 \leq i \leq 8,     \\
   t^{-1}b_it    & \!\!   = \    \phi(b_i) & \quad 1 \leq i \leq 8, &     t^{-1}d t & \!\! = \    \psi(c_3) \psi(c_5)  \phi(b_5)^{-1}   \,  d \,  \phi(b_3),   \\
 \end{array}$$         
where $\theta$,  $\phi$ and $\psi$ are defined by  
\begin{align*}
 \theta(a_i)    \   = \ &   
         \begin{cases} 
                  \ u a_1 v    &     i=0, \\
         \ a_0    &     i=1, \\
         \ a_i a_{i-1}  &   1 < i \leq  k,  
         \end{cases}   \\
   \phi(b_i)    \   = \  &   
         \begin{cases} 
           \ b_i \ldots b_8   &  1 \leq i \leq 7, \\  
          \ {b_8}^{-1} \ldots {b_1}^{-1}b_8   &     i=8,            
         \end{cases} \\
   \psi(c_i)    \   = \  &   
         \begin{cases} 
           \   c_i \ldots c_8   &  1 \leq i \leq 7, \\  
           \   {c_8}^{-1} \ldots {c_1}^{-1}c_8    &     i=8,            
         \end{cases}           
\end{align*}      
and   $u   :=    t^{-k} {c_7}^{-1} c_3 d {b_3}^{-1}  t^{k}$,  $v   :=   t^{-k} {b_3} d^{-1} {c_3}^{-1}  {c_1}   t^{k}$, and 
$w  :=   \psi({c_3})^{-1}  \,  t^{-1}  b_3 d^{-1} t$. 


Three remarks:
\begin{enumerate}
\item   $u$, $v$, and $w$ represent elements of the subgroup $\langle b_1, \ldots, b_8, \, c_1, \ldots, c_8, \, d \rangle$.
\item   Both $\phi$ and $\psi$ define automorphisms of  $F(b_1, \ldots, b_8)$  and $F(c_1, \ldots, c_8)$, respectively, as would $\theta$ for  $F(a_0, \ldots, a_k)$ were $\theta(a_0)$ equal to $a_1$ rather than $u a_1 v$.  
\item The action of $t$ by conjugation on $$F(a_0, a_1, \ldots, a_k, \  b_1, \ldots, b_8,  \ c_1, \ldots, c_8, \   d)$$ apparent in the presentation $Q_k$ is an automorphism because there are sequences of Nielsen transformations taking  
\begin{align*}
(a_0, & a_1,  \ldots, a_k, \ b_1, \ldots, b_8, \  c_1, \ldots, c_8,  \ d) \\
 &  \to \ (a_1, t^{-1}a_1t,   \ldots, t^{-1}a_kt, \  b_1, \ldots, b_8, \ c_1, \ldots, c_8,  \ d) \\
 &  \to \ (t^{-1}a_0t, t^{-1}a_1t,   \ldots, t^{-1}a_kt, \  b_1, \ldots, b_8,  \ c_1, \ldots, c_8, \ d) \\
 &  \to \ (t^{-1}a_0t, t^{-1}a_1t,   \ldots, t^{-1}a_kt, \  b_1, \ldots, b_8, \  \psi(c_1), \ldots, \psi(c_8), \ d) \\ 
 & \to \ (t^{-1}a_0t, t^{-1}a_1t,   \ldots, t^{-1}a_kt, \ b_1, \ldots, b_8, \ \psi(c_1), \ldots, \psi(c_8), \ t^{-1}dt) \\
 & \to \ (t^{-1}a_0t, t^{-1}a_1t,   \ldots, t^{-1}a_kt, \  b_1, \ldots, b_8,  \ t^{-1} c_1 t, \ldots, t^{-1} c_8 t, \ t^{-1}dt) \\
 & \to \ (t^{-1}a_0t, t^{-1}a_1t,   \ldots, t^{-1}a_kt, \  t^{-1}b_1t, \ldots, t^{-1}b_8t, \  t^{-1} c_1 t, \ldots, t^{-1} c_8 t, \ t^{-1}dt).
 \end{align*}  
 \end{enumerate}

The subgroup $\Lambda_k$ of Theorem~\ref{Main} will be $$\langle \, a_0t, \ldots, a_kt, \, b_1, \ldots, b_8, \, c_1, \ldots, c_8, \, d \, \rangle.$$ 

\subsection{The equivalence of the presentations.}

\begin{prop} \label{equiv presentations}
$P_k$ and $Q_k$ present the same groups.   
\end{prop}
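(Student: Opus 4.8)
The plan is to prove the isomorphism by exhibiting an explicit dictionary between the two generating sets and checking, via Tietze transformations, that each presentation's relators are consequences of the other's. Concretely, I would define a homomorphism $\Phi$ from the group presented by $Q_k$ to the group presented by $P_k$ by specifying the image of every $Q_k$-generator as a word in the $P_k$-generators, verify that $\Phi$ kills every $Q_k$-relator, build a candidate inverse $\Psi$ the same way, and finally check that $\Phi$ and $\Psi$ compose to the identity on generators. Since everything is defined on generators, this reduces the whole proposition to a finite list of word identities in free groups modulo the relators.

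The backbone of the dictionary is $\tau \leftrightarrow t$. The remaining $P_k$-generators all have positive height under the homomorphism to $\Z$ recording the exponent sum in $\tau$ (equivalently $t$), so the height-zero free generators $a_i, b_i, c_i, d$ of $Q_k$ must correspond to products of $P_k$-generators carrying a compensating power of $\tau^{-1}$. I would first treat the $\beta$-tree and the $\gamma$-tree separately: each is a labelled oriented tree whose relations $\beta_i^{-1}\tau\beta_i = \beta_{i+1}$, $\beta_8\tau\beta_8^{-1}=\beta_1$ (respectively the $\gamma,\sigma$ relations) present a Barnard--Brady free-by-cyclic group, and the standard change of variables converting such a tree into mapping-torus form $t^{-1}b_it=\phi(b_i)$ (respectively $\sigma^{-1}c_i\sigma=\psi(c_i)$) supplies the $b_i\leftrightarrow\beta_i$ and $c_i\leftrightarrow\gamma_i$ parts of the dictionary.

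The two features that distinguish $Q_k$ from a disjoint union of mapping tori are exactly the two coupling $2$-cells, and handling them is where the real work lies. Because $Q_k$ has a single stable letter while $P_k$ has two, I expect $\sigma$ to be identified with a conjugate $w^{-1}\tau w$ of $\tau$; this is precisely what forces conjugation of $\psi$ by $w$ in the relations $t^{-1}c_it = w^{-1}\psi(c_i)w$. The square relation $\gamma_3\beta_5=\beta_3\gamma_5$ is what lets one solve for $\sigma$ in terms of $\tau$ and the other generators: it should produce the new generator $d$ (a height-zero combination such as $\beta_3^{-1}\gamma_3$) and pin down the exact word $w=\psi(c_3)^{-1}t^{-1}b_3d^{-1}t$ together with the twisted relation $t^{-1}dt=\psi(c_3)\psi(c_5)\phi(b_5)^{-1}d\phi(b_3)$. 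Similarly, the hexagonal relation $\alpha_1\gamma_1\tau=\tau\gamma_7\alpha_1$ is responsible for the extra generator $a_0$ and for replacing the plain hydra rule by $t^{-1}a_1t=a_0$, $t^{-1}a_0t=ua_1v$ with the displayed $u,v$.

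The main obstacle is discovering and then verifying this dictionary: the conceptual structure (heights, two trees, two coupling cells) fixes the shape of every substitution, but confirming that the square and hexagon translate into exactly the prescribed words $u,v,w$ and the twisted $t$-actions on $d,a_0,c_i$ requires a careful, bookkeeping-heavy manipulation of words modulo the relators. Once the dictionary is in hand, checking that each $Q_k$-relator holds in the group presented by $P_k$ and each $P_k$-relator in the group presented by $Q_k$, and that the two substitutions are mutually inverse, is routine but lengthy; the tree relations are designed so that the edges can be used to eliminate $\beta_2,\dots,\beta_8$ and $\gamma_2,\dots,\gamma_8$, which keeps these verifications finite and mechanical.
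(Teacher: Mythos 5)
Your overall strategy coincides with the paper's: an explicit dictionary between the generating sets, verified by Tietze moves, with the $\beta$-- and $\gamma$--trees converted to mapping--torus form and the two coupling cells producing $d$, $w$, $a_0$, $u$, $v$. Indeed the paper's Lemma~\ref{smaller equiv pres} is exactly your tree--by--tree dictionary: $\tau \mapsto t^{-1}$ (note the inverse --- all of $\beta_i$, $\gamma_i$, $\alpha_j$ have the \emph{same} $\tau$--height as $\tau$, so your height bookkeeping has the sign reversed but the right shape), $\beta_i \mapsto t^{-1}b_i$, $\beta_8 \mapsto b_8 t^{-1}$, and the $\gamma$--tree gets its \emph{own} stable letter $s = c_3 d {b_3}^{-1} t$ with $\sigma \mapsto s^{-1}$; your guess $d \approx \beta_3^{-1}\gamma_3$ is correct up to inversion, and your reading of the square cell matches the paper's translation of $\gamma_3\beta_5 = \beta_3\gamma_5$ into the twisted $d$--relation. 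One local slip: $\sigma$ is not identified with a conjugate $w^{-1}\tau w$ of $\tau$; rather the image $s$ of $\sigma^{-1}$ satisfies $s = t w^{-1}$ with $w = s^{-1}t$ of $t$--height zero, so the conjugated form $w^{-1}\psi(c_i)w$ records a multiplicative correction between the two stable letters, not conjugacy of $\sigma$ to $\tau$.

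The genuine gap is the $\alpha$--chain for general $k$. Your dictionary accounts for $a_0$ and $a_1$ via the hexagon but says nothing about how $\alpha_2, \ldots, \alpha_k$ correspond to $a_2, \ldots, a_k$, and you assert that the eliminations needed are of $\beta_2, \ldots, \beta_8$ and $\gamma_2, \ldots, \gamma_8$ --- whereas in the paper those generators are never eliminated (Lemma~\ref{smaller equiv pres} is a direct substitution); it is $\alpha_1, \ldots, \alpha_{k-1}$ that must be Tietze--eliminated, and this is where the paper's proof spends its effort. The relations $\alpha_i^{-1}\tau\alpha_i = \alpha_{i-1}$ are conjugations by the $\alpha_i$ themselves, not by the stable letter, and no uniform letter--by--letter substitution works: the correct correspondence is index--dependent, $\alpha_j \leftrightarrow t^{\,k-j-1} a_j t^{-(k-j)}$ modulo the relations. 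The paper gets at this by eliminating $\alpha_1, \ldots, \alpha_{k-1}$ down to a single relator in $\alpha_k$ via the recursion $u_0 = \alpha_k$, $u_{i+1} = u_i^{-1}t^{-1}u_i$, invoking the free--equality Lemma~\ref{word change} (from \cite{DR}) that $u_i$ freely equals $t^{i-1}v_i t^{-i}$ after substituting $a_k$ for $\alpha_k$, where $v_0 = a_k$ and $v_{i+1} = v_i^{-1}t^{-1}v_i t$, and then reintroducing $a_{k-1}, \ldots, a_1$ and finally $a_0$. This recursion is also precisely what produces the conjugating powers $t^{\mp k}$ in the words $u$ and $v$, which your sketch attributes to the hexagon without explanation. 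Without this lemma (or, equivalently, the explicit $\alpha_j$--images above), the ``finite list of word identities'' your plan reduces to cannot even be written down, so as it stands the proposal does not yet yield the proposition for $k \geq 2$.
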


Before we prove this, let $P'_k$ be the presentation obtained from $P_k$ by removing all the generators $\alpha_i$ and all the relations in which they occur.  
Likewise, let $Q'_k$ be that obtained from $Q_k$ by removing all the generators $a_i$ and all the relations in which they occur.  

\begin{lemma} \label{smaller equiv pres}
The groups presented by $P'_k$ and $Q'_k$ are isomorphic via 
$$\begin{array}{ll}
 \tau \mapsto t^{-1}, \qquad & 
\beta_i \mapsto 
   \begin{cases} 
   t^{-1}b_i & 1 \leq i \leq 7, \\
   b_8t^{-1} & i=8, \\
   \end{cases}   
 \\    
 \sigma \mapsto s^{-1}, \qquad & 
\gamma_i \mapsto 
   \begin{cases} 
   s^{-1}c_i & 1 \leq i \leq 7, \\
   c_8s^{-1} & i=8, \\
   \end{cases}   
    \end{array}$$
where $s =  c_3 d {b_3}^{-1} t$.  
\end{lemma}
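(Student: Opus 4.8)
The plan is to realise the stated correspondence as a homomorphism $\Phi$ from the group presented by $P'_k$ to the group presented by $Q'_k$, to produce an explicit inverse $\Psi$, and to check that both are well defined by verifying they carry defining relators to the identity. Set $s := c_3 d b_3^{-1} t$ and send $\tau \mapsto t^{-1}$, $\sigma\mapsto s^{-1}$, $\beta_i\mapsto t^{-1}b_i$ (and $\beta_8\mapsto b_8t^{-1}$), $\gamma_i\mapsto s^{-1}c_i$ (and $\gamma_8\mapsto c_8 s^{-1}$). Inverting these relations gives the candidate $\Psi$: $t\mapsto\tau^{-1}$, $b_i\mapsto\tau^{-1}\beta_i$ (and $b_8\mapsto\beta_8\tau^{-1}$), $c_i\mapsto\sigma^{-1}\gamma_i$ (and $c_8\mapsto\gamma_8\sigma^{-1}$), and, solving $s=c_3db_3^{-1}t$ for $d$, the assignment $d\mapsto\gamma_3^{-1}\beta_3$. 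By Remark~(3) the group $Q'_k$ is free--by--cyclic, equal to $F(b_1,\dots,b_8,c_1,\dots,c_8,d)\rtimes\langle t\rangle$ with $t$ acting by $x\mapsto t^{-1}xt$; so every element has a normal form (fibre word)$\,t^n$, and an identity between two such expressions holds if and only if the fibre words agree in the free group. This is the device by which all the verifications below are carried out.

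For $\Phi$ to descend to the quotient I would check that each defining relator of $P'_k$ maps to $1$. The relations $\beta_i^{-1}\tau\beta_i=\beta_{i+1}$ and $\beta_8\tau\beta_8^{-1}=\beta_1$ reduce, after substituting $t^{-1}b_i=\phi(b_i)t^{-1}$, to identities that hold because of the ascending/cyclic shape of $\phi$ (Remark~(2)); for instance $\Phi(\beta_i^{-1}\tau\beta_i)=b_i^{-1}t^{-1}b_i=b_i^{-1}\phi(b_i)\,t^{-1}=b_{i+1}\cdots b_8\,t^{-1}=\Phi(\beta_{i+1})$. The $\gamma$--relations are formally identical with $(s,c_i,\psi)$ in place of $(t,b_i,\phi)$, so they \emph{all} follow from the single identity $s^{-1}c_is=\psi(c_i)$ in $Q'_k$. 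Establishing that identity is the heart of the matter: writing $s=gt$ with $g=c_3db_3^{-1}$ one has $s^{-1}c_is=t^{-1}(g^{-1}c_ig)t$, and expanding the right--hand side with the relations $t^{-1}c_it=w^{-1}\psi(c_i)w$, $t^{-1}dt=D$ (I abbreviate the given word as $D$) and $t^{-1}b_3t=\phi(b_3)$, the factor $w=\psi(c_3)^{-1}\,t^{-1}b_3d^{-1}t$ has been engineered precisely so that the conjugating word surrounding $\psi(c_i)$ collapses to the identity, leaving $\psi(c_i)$. The remaining, and most delicate, relator is the ``gluing'' square $\gamma_3\beta_5=\beta_3\gamma_5$: here one reduces $\Phi(\gamma_3\beta_5)$ and $\Phi(\beta_3\gamma_5)$ to normal form and compares fibre words, the equality being forced by the exact shape of $t^{-1}dt$. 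I expect this square relation to be the main obstacle, since it is the only relation coupling the two trees and the one whose verification genuinely uses the definition of $d$ together with the fact (Remark~(1)) that $w$, and hence $s^{-1}$, represents a fibre element.

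Finally I would verify that $\Psi$ carries the defining relators of $Q'_k$ to $1$ --- these are the mirror--image computations, the relations $t^{-1}c_it=w^{-1}\psi(c_i)w$ and $t^{-1}dt=D$ becoming consequences of the $\gamma$-- and square relations of $P'_k$ --- and that $\Phi$ and $\Psi$ are mutually inverse. The last point is immediate on generators: e.g. $\Psi\Phi(\beta_i)=\Psi(t^{-1}b_i)=\tau\cdot\tau^{-1}\beta_i=\beta_i$ and $\Phi\Psi(d)=\Phi(\gamma_3^{-1}\beta_3)=c_3^{-1}s\,t^{-1}b_3=c_3^{-1}(c_3db_3^{-1}t)t^{-1}b_3=d$. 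Once both maps are shown to be well defined, these identities make them inverse isomorphisms, proving the lemma. Equivalently, the whole argument can be packaged as a sequence of Tietze transformations turning $Q'_k$ into $P'_k$ --- adjoin $s$, $\tau$, $\sigma$, $\beta_i$, $\gamma_i$ as abbreviations, eliminate $t,s,b_i,c_i,d$, and simplify --- with the relation--by--relation bookkeeping being the same computation.
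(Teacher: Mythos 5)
Your proposal is correct and is essentially the paper's own argument: the paper likewise treats the two LOT pieces as free--by--cyclic groups under exactly these substitutions (leaving that routine verification to the reader, as you sketch it), converts the relations $s^{-1}c_is=\psi(c_i)$ into $t^{-1}c_it=w^{-1}\psi(c_i)w$ by the very collapse you describe ($w$ being engineered so that $t^{-1}b_3d^{-1}t=\psi(c_3)w$ makes the conjugators cancel), and turns the gluing square $\gamma_3\beta_5=\beta_3\gamma_5$ into the $t^{-1}dt$ relation by substituting $s=c_3db_3^{-1}t$ --- the only packaging difference being that the paper runs this as a chain of Tietze moves on $P'_k$, so it never needs your explicit inverse $\Psi$ or the two-way well-definedness checks, which your final sentence already observes. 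One practical warning for the square computation you defer as ``the main obstacle'': it is the step where signs matter most, and the paper's own displayed intermediate equations there contain slips (inverting $d^{-1}t^{-1}b_5=t^{-1}b_3d^{-1}c_3^{-1}c_5$ gives $b_5^{-1}td=c_5^{-1}c_3\,d\,b_3^{-1}t$, with $c_3$ rather than the printed $c_3^{-1}$), so when you compare fibre words in the normal form you should recompute that rearrangement from scratch rather than trust the displayed formulas.
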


\begin{proof}
As  the reader can verify, $\tau \mapsto t^{-1}$, \, $\beta_i \mapsto t^{-1}b_i$ for $1 \leq i \leq 7$, and $\beta_8 \mapsto b_8t^{-1}$   defines an isomorphism 
\begin{align*} 
 \langle \ & \beta_1, \ldots, \beta_8, \tau \ \mid \ {\beta_{i}}^{-1} \tau \beta_{i} \,  = \, \beta_{i+1}  \   (1 \leq i \leq 7),   \quad  {\beta_{8}}  \tau  {\beta_{8}}^{-1} \,  =  \,   \beta_{1} \rangle \\
& \qquad  \to \  F(b_1, \ldots, b_8) \rtimes_{\phi} \Z \ = \ \langle \, b_1, \ldots, b_8, t \, \mid \, t^{-1}b_i t = \phi(b_i)  \  \ (1 \leq i \leq 8)  \, \rangle. 
\end{align*} 
 Likewise,  
 \begin{align*} 
\langle \ & \gamma_1, \ldots, \gamma_8, \sigma \ \mid \ {\gamma_{i}}^{-1} \sigma \gamma_{i} \,  = \, \gamma_{i+1}    \textup{ for } 1 \leq i \leq 7,   \   {\gamma_{8}} \sigma  {\gamma_{8}}^{-1} \,  =  \,   \gamma_{1} \, \rangle \\
& \qquad  \cong  \ \langle \, c_1, \ldots, c_8, s \, \mid \, s^{-1}c_i s = \psi(c_i)   \textup{ for }  1 \leq i \leq 8 \, \rangle, 
\end{align*}   
via $\sigma \mapsto s^{-1}$, \, $\gamma_i \mapsto s^{-1} c_i$ for $1 \leq i \leq 7$, and $\gamma_8 \mapsto c_8s^{-1}$.
 
These maps translate the relation $\gamma_3 \beta_5  =    \beta_3 \gamma_5$ to $s^{-1} c_3 t^{-1} b_5 = t^{-1} b_3  s^{-1} c_5$, which becomes ${b_5}^{-1} t d =  {c_5}^{-1}  {c_3}^{-1} d {b_3}^{-1} t$ on  substituting for  $s =  c_3 d  {b_3}^{-1} t$ and simplifying.  Inverting, we find this is equivalent to  $t {b_5}^{-1} t  d  \  = \  {c_5}^{-1} {c_3}^{-1} d {b_3}^{-1} t$, which rearranges to  $t^{-1}d t   =  (t^{-1} c_3 c_5 {b_5}^{-1} t) \,  d \,  (t^{-1} b_3 t)$. 
 
 Now ${c_3}^{-1} s t^{-1} b_3 d^{-1} =1$ since $s= c_3 d {b_3}^{-1} t$.  So the relation $s^{-1} c_i s = \psi(c_i)$ is equivalent to 
$t^{-1} c_i t =  w^{-1} \psi(c_i) w$ since 
\begin{align*}
t^{-1} c_i t  & \ = \ t^{-1} \, (d {b_3}^{-1} t s^{-1} c_3) \, c_i \, ({c_3}^{-1} s t^{-1} b_3 d^{-1}) \, t    \\
& = \  (t^{-1} dt) \, (t^{-1} {b_3}^{-1} t) \,   (s^{-1} c_3 s ) \,   (s^{-1} c_i s ) \, (s^{-1} {c_3}^{-1} s ) \,  (t^{-1} b_3  t) \,  (t^{-1} d^{-1}  t) \\
& = \  w^{-1}    \psi(c_i)  w.   
\end{align*}
\end{proof}

Inductively define words $u_i$ and $v_i$ for $i \geq 0$ by
$$\begin{array}{llll}
 u_0  & \!\! = \  \alpha_k, \qquad u_{i+1} &  \!\! = \  {u_i}^{-1} t^{-1} u_i & (i \geq 0), \\
 v_0  & \!\! = \  a_k, \qquad v_{i+1} &  \!\! = \   {v_i}^{-1} t^{-1}  v_i t  & (i \geq 0). \\
\end{array}$$
 The following observation from \cite{DR} can be proved by inducting on $i$. 
 
\begin{lemma} \label{word change}
On substituting an $a_k$ for each $\alpha_k$ in $u_i$,  the words $u_i $ and $t^{i-1}v_it^{-i}$ become freely equal for all $i \geq 1$.  
\end{lemma}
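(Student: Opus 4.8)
The plan is a straightforward induction on $i$, the only conceptual point being that substituting $a_k$ for every $\alpha_k$ is a free-group homomorphism that fixes $t$ (and indeed every other generator). Write $\hat u_i$ for the word obtained from $u_i$ by this substitution. Because the substitution is a homomorphism leaving $t$ alone, it commutes with the defining recursion: from $u_{i+1} = u_i^{-1} t^{-1} u_i$ we get $\hat u_{i+1} = \hat u_i^{-1} t^{-1} \hat u_i$. So it suffices to prove $\hat u_i \FreeEq t^{i-1} v_i t^{-i}$ for all $i \geq 1$ by induction, using this recursion for $\hat u_i$ together with the given recursion $v_{i+1} = v_i^{-1} t^{-1} v_i t$ for the $v_i$.

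For the base case $i = 1$ one computes directly that $\hat u_1 = a_k^{-1} t^{-1} a_k$, while $t^0 v_1 t^{-1} = (a_k^{-1} t^{-1} a_k t)\, t^{-1} = a_k^{-1} t^{-1} a_k$, so the two agree. For the inductive step, assume $\hat u_i \FreeEq t^{i-1} v_i t^{-i}$. Feeding this into $\hat u_{i+1} = \hat u_i^{-1} t^{-1} \hat u_i$ and using $\hat u_i^{-1} \FreeEq t^i v_i^{-1} t^{-(i-1)}$, the three interior blocks of $t$'s telescope, $t^{-(i-1)}\, t^{-1}\, t^{i-1} = t^{-1}$, leaving $\hat u_{i+1} \FreeEq t^i v_i^{-1} t^{-1} v_i t^{-i}$. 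On the other side, $t^i v_{i+1} t^{-(i+1)} = t^i (v_i^{-1} t^{-1} v_i t)\, t^{-(i+1)} = t^i v_i^{-1} t^{-1} v_i t^{-i}$, which is the same word. This closes the induction.

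There is no real obstacle: all the content is in tracking the conjugating powers of $t$ so that the prefix/suffix pair $t^{i-1}\,(\cdot)\,t^{-i}$ advances correctly to $t^{i}\,(\cdot)\,t^{-(i+1)}$, which is exactly what the extra $t$ on the right of the $v$-recursion arranges. The single point worth stating explicitly is that the substitution fixes $t$, since that is what legitimises replacing the recursion for $u_{i+1}$ by the identical recursion for $\hat u_{i+1}$.
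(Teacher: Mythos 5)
Your proof is correct and is exactly the induction on $i$ that the paper indicates (it offers no written proof, merely noting the lemma ``can be proved by inducting on $i$'' and citing \cite{DR}): the base case, the telescoping of the powers of $t$ in the step, and the observation that the substitution $\alpha_k \mapsto a_k$ is a homomorphism fixing $t$ are all as intended. Nothing further is needed.
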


\begin{proof}[Proof of Proposition~\ref{equiv presentations}]
By Lemma~\ref{smaller equiv pres} there is a sequence of Tietze moves  carrying the subpresentation $P'_k$ of $P_k$ to $Q'_k$ and 
the remaining relations (those involving the $\alpha_i$) to 
$$\alpha_1 s^{-1} c_1  t^{-1} \  = \ t^{-1} s^{-1} c_7 \alpha_1,     \qquad {\alpha_{i}}^{-1}  t^{-1} \alpha_{i}  \  = \    \!\! \alpha_{i-1},  \ \ 1 < i \leq  k.$$ 
A sequence of Tietze moves eliminating $\alpha_1$,  \ldots, $\alpha_{k-1}$ transforms this family to the single relation $$u_{k-1} s^{-1} c_1  t^{-1} \  = \ t^{-1} s^{-1} c_7 u_{k-1}.$$  
Now substitute an $a_k$ for each $\alpha_k$, then by Lemma~\ref{word change}, this relation is equivalent to 
$$( t^{k-2} v_{k-1} t^{-(k-1)}) s^{-1} c_1  t^{-1} \  = \ t^{-1} s^{-1} c_7 \ ( t^{k-2} v_{k-1} t^{-(k-1)}),$$
which becomes 
$$t^{-1} v_{k-1} t \  = \  (t^{-(k-1)}  {c_7}^{-1} s t^{k-1}) \ v_{k-1} \ (t^{-(k-1)}  s^{-1} c_1 t^{k-1})$$
on conjugating by $t^{k-1}$ and rearranging.  A sequence of Tietze moves introducing $a_{k-1}$, \ldots, $a_1$ expands this to the family of relations
  $$ t^{-1} a_1 t  =  t^{-(k-1)} {c_7}^{-1}s t^{k-1}  \, a_1 \,   t^{-(k-1)} s^{-1} c_1 t^{k-1},   \qquad t^{-1}a_it   =  a_i a_{i-1} , \   1 < i \leq  k.$$ 

The first of these relations becomes $ t^{-1} a_1 t  = a_0$ when we introduce $a_0$ together with the new relation $$a_0 \ = \ t^{-(k-1)} {c_7}^{-1}s t^{k-1}  \, a_1 \,   t^{-(k-1)} s^{-1} c_1 t^{k-1},$$  which  becomes $t^{-1} a_0 t = u a_1 v$ on conjugating by $t$ and eliminating the $s$ and $s^{-1}$ using $s =  c_3 d {b_3}^{-1} t$. 
 \end{proof}

\section{Hyperbolicity}  \label{hyperbolicity}


 We establish hyperbolicity using  techniques employed in \cite{BaBr} and \cite{MeMu}. 


Consider  the presentation 2--complex $K_k$ for  $P_k$ assembled from Euclidean unit--squares associated to each of the  defining relations with the single exception of $\alpha_1 \gamma_1 \tau    =  \tau \gamma_7 \alpha_1$ for which we use a Euclidean hexagon made from one unit square and two equilateral triangles as shown in Figure~\ref{LOTs}.

 \begin{figure}[ht]
 \psfrag{s+}{\footnotesize{$\sigma^+$}}
 \psfrag{s-}{\footnotesize{$\sigma^-$}}
 \psfrag{t+}{\footnotesize{$\tau^+$}}
 \psfrag{t-}{\footnotesize{$\tau^-$}}
\psfrag{1+}{\footnotesize{$\alpha^+_1$}}
\psfrag{2+}{\footnotesize{$\alpha^+_2$}}
\psfrag{3+}{\footnotesize{$\alpha^+_3$}}
\psfrag{4+}{\footnotesize{$\alpha^+_4$}}
\psfrag{5+}{\footnotesize{$\alpha^+_5$}}
\psfrag{6+}{\footnotesize{$\alpha^+_6$}}
\psfrag{1-}{\footnotesize{$\alpha^-_1$}}
\psfrag{2-}{\footnotesize{$\alpha^-_2$}}
\psfrag{3-}{\footnotesize{$\alpha^-_3$}}
\psfrag{4-}{\footnotesize{$\alpha^-_4$}}
\psfrag{5-}{\footnotesize{$\alpha^-_5$}}
\psfrag{6-}{\footnotesize{$\alpha^-_6$}}
\psfrag{a+}{\footnotesize{$\beta^+_1$}}
\psfrag{b+}{\footnotesize{$\beta^+_2$}}
\psfrag{c+}{\footnotesize{$\beta^+_3$}}
\psfrag{d+}{\footnotesize{$\beta^+_4$}}
\psfrag{e+}{\footnotesize{$\beta^+_5$}}
\psfrag{f+}{\footnotesize{$\beta^+_6$}}
\psfrag{g+}{\footnotesize{$\beta^+_7$}}
\psfrag{h+}{\footnotesize{$\beta^+_8$}}
\psfrag{a-}{\footnotesize{$\beta^-_1$}}
\psfrag{b-}{\footnotesize{$\beta^-_2$}}
\psfrag{c-}{\footnotesize{$\beta^-_3$}}
\psfrag{d-}{\footnotesize{$\beta^-_4$}}
\psfrag{e-}{\footnotesize{$\beta^-_5$}}
\psfrag{f-}{\footnotesize{$\beta^-_6$}}
\psfrag{g-}{\footnotesize{$\beta^-_7$}}
\psfrag{h-}{\footnotesize{$\beta^-_8$}}
\psfrag{A+}{\footnotesize{$\gamma^+_1$}}
\psfrag{B+}{\footnotesize{$\gamma^+_2$}}
\psfrag{C+}{\footnotesize{$\gamma^+_3$}}
\psfrag{D+}{\footnotesize{$\gamma^+_4$}}
\psfrag{E+}{\footnotesize{$\gamma^+_5$}}
\psfrag{F+}{\footnotesize{$\gamma^+_6$}}
\psfrag{G+}{\footnotesize{$\gamma^+_7$}}
\psfrag{H+}{\footnotesize{$\gamma^+_8$}}
\psfrag{A-}{\footnotesize{$\gamma^-_1$}}
\psfrag{B-}{\footnotesize{$\gamma^-_2$}}
\psfrag{C-}{\footnotesize{$\gamma^-_3$}}
\psfrag{D-}{\footnotesize{$\gamma^-_4$}}
\psfrag{E-}{\footnotesize{$\gamma^-_5$}}
\psfrag{F-}{\footnotesize{$\gamma^-_6$}}
\psfrag{G-}{\footnotesize{$\gamma^-_7$}}
\psfrag{H-}{\footnotesize{$\gamma^-_8$}}
\centerline{\epsfig{file=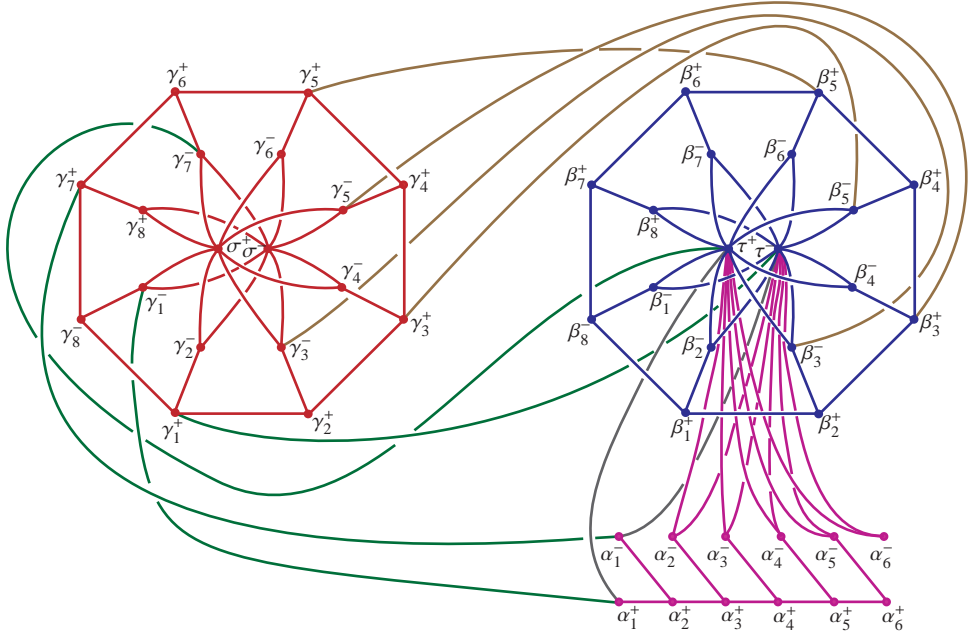}} \caption{The link of the vertex in the presentation 2--complex associated to the presentation $P_6$  for $\Gamma_6$ given in Section~\ref{our examples}.
The  two  grey edges have length $\pi/3$, the four green edges have length $5\pi/6$, and all other edges have length $\pi/2$.} \label{Link}
\end{figure}

The link in the case $k=6$ is shown in Figure~\ref{Link}.   All edges have length $\pi/2$ apart from the edges  $\tau^+  \mbox{---} \alpha_1^{+}$ and $\tau^-  \mbox{---}  \alpha_1^{-}$ (shown in grey), which have length $\pi/3$, and the edges from $\gamma_1^+  \mbox{---}  \tau^-$, $\gamma_7^+  \mbox{---}  \alpha_1^-$, $\alpha_1^+  \mbox{---}  \gamma_1^-$, and $\tau^+  \mbox{---}  \gamma_7^+$ (shown in green), which have length $5\pi/6$.
Inspecting the link we see that any simple loop in the graph has length at least $2 \pi$ (separately considering the cases of monochrome and multicoloured simple loops in Figure~\ref{LOTs} helps to check this)  --- that is, for all $k \geq 1$, the link is \emph{large}.
So $K_k$ satisfies the \emph{link condition} (see \cite{BrH}) and  its  universal cover $\widetilde{K_k}$ is therefore a $\textup{CAT}(0)$--space.  

To show that $\Gamma_k$ is hyperbolic we will show that $\widetilde{K_k}$ contains no subspace isometric to  $\mathbb{E}^2$ and then appeal to the Flat Plane Theorem of \cite{Bridson11, Gromov4}.  The link of a vertex in any isometric copy of $\mathbb{E}^2$ in  $\widetilde{K_k}$ would appear as a simple loop of length $2 \pi$ in the link.  But inspecting the link, we find that no edges of length $\pi/3$ or  $5\pi/6$ (the grey and green edges) occur in a simple loop of length $2 \pi$.  Next one can check  the edges  $\gamma_3^+  \mbox{---}  \beta_5^-$, $\gamma_3^-  \mbox{---}  \beta_3^-$, $\gamma_5^-  \mbox{---}  \beta_3^+$ and $\gamma_5^+  \mbox{---}  \beta_5^+$ (the brown edges in the figure) do not occur in a simple loop of length $2 \pi$.  Then it becomes evident that edges  occurring in simple loops of length $2 \pi$ are precisely the edges  
$$\begin{array}{llll}
\tau^+  \mbox{---}  \alpha_i^-,  &  \tau^-  \mbox{---}  \alpha_i^-  \  \  (2 \leq i \leq 7),  \\
\tau^+  \mbox{---}  \beta_i^-,  &  \tau^-  \mbox{---}  \beta_i^-  \  \  (1 \leq i \leq 7),   &  \quad  \tau^+  \mbox{---}  \beta_8^+, &  \tau^-  \mbox{---}  \beta_8^+, \\
\sigma^+  \mbox{---}  \gamma_i^-,  &  \sigma^-  \mbox{---}  \gamma_i^-  \  \  (1 \leq i \leq 7),   &  \quad  \sigma^+  \mbox{---}  \gamma_8^+, &  \sigma^-  \mbox{---}  \gamma_8^+. \\
 \end{array}$$ 
So every corner of every 2--cell in an isometrically embedded copy of $\mathbb{E}^2$ must give rise to one of the edges in this list.  But, looking at the defining relations, we see that no  2--cell in $\widetilde{K_k}$ has this property.  Therefore there are no such  $\mathbb{E}^2$ and  $\Gamma_k$ is hyperbolic by the Flat Plane Theorem.


\section{\texorpdfstring{Freeness and distortion}{Freeness  and distortion}}  \label{freeness and distortion}

\subsection{A family of free--by--cyclic groups}

Fix an integer $l \geq 1$, words $u$ and $v$ on $b_1, \ldots, b_l$, and an automorphism $\phi$ of $F(b_1, \ldots, b_l)$.  Then, for $k \geq 1$, define $$\Psi_k  \ := \ F(a_0, \ldots, a_k, b_1, \ldots, b_l) \rtimes_{\theta} \Z$$ where $\theta$ is the automorphism of $F(a_0, \ldots, a_k, b_1, \ldots, b_l)$ whose restriction to $F(b_1, \ldots, b_l)$ is $\phi$ and   
$$\theta(a_i)  \ = \  \begin{cases} 
         \ u a_1 v   &  i=0, \\  
         \ a_0    &     i=1, \\
         \ a_i a_{i-1}  &   1 < i \leq  k. 
         \end{cases}   $$
 Let $t$ denote a generator of the $\Z$--factor, so $t^{-1} a_i t = \theta(a_i)$ and $t^{-1} b_j t = \theta(b_j)$ for all $i$ and $j$.


The presentation $Q_k$  in Section~\ref{Q presentation} shows  $\Gamma_k$ is an example of such a $\Psi_k$.  

Our aim in the remainder of this section is to  establish ---

\begin{prop} \label{distortion and freeness prop}
 The subgroup $$\Lambda_k \ := \   \langle a_0t,   \ldots, a_kt, b_1, \ldots, b_l \rangle$$ of $\Psi_k$ is free of rank $k+l+1$ and $\Dist^{\Psi_k}_{\Lambda_k} \simeq A_k$.   
\end{prop}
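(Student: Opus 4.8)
The plan is to prove the two assertions—that $\Lambda_k$ is free of rank $k+l+1$, and that $\Dist^{\Psi_k}_{\Lambda_k}\simeq A_k$—separately, and in both cases to reduce matters to the hydra phenomenon of \cite{DR}. The guiding observation is that $a_0$, the $b_j$, and the words $u,v$ interfere with $\theta$ only at and below index $1$ (via $\theta(a_1)=a_0$, $\theta(a_0)=ua_1v$, $\theta|_{F(b_1,\dots,b_l)}=\phi$), whereas the Ackermannian behaviour is produced entirely by $\theta(a_i)=a_ia_{i-1}$ on $a_2,\dots,a_k$, which is exactly the Dison--Riley automorphism. So the combinatorics of the fight is unchanged and the key estimate $\mathcal{H}_k\simeq A_k$ of \cite{DR} is available verbatim.

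\emph{Freeness.} I would show $a_0t,\dots,a_kt,b_1,\dots,b_l$ freely generate by a normal-form argument graded by the $t$-exponent homomorphism $\epsilon\colon\Psi_k\to\Z$ (with $\epsilon(a_it)=1$, $\epsilon(b_j)=0$). Using the identity $a_it=t\,\theta(a_i)$, any word $W$ in the generators can be pushed into the form $t^{\epsilon(W)}f$ with $f$ in the fibre $F(a_0,\dots,a_k,b_1,\dots,b_l)$ a product of terms $\theta^{m}(a_i)^{\pm1}$ and $\theta^{m}(b_j)^{\pm1}$. If $\epsilon(W)\neq0$ we are done; otherwise one checks that a reduced, non-empty $W$ cannot give $f=1$ by tracking the surviving highest-index letters $a_i$ and then, once these are exhausted, the $b_j$—here one uses that $F(b_1,\dots,b_l)$ is a $\theta$-invariant free factor of the fibre (since $\phi$ is an automorphism), so the $b_j$ genuinely behave freely. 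As the generating set has $k+l+1$ elements, freeness forces the rank to be exactly $k+l+1$. (An alternative is to analyse the action of $\Lambda_k$ on the Bass--Serre tree of the mapping-torus decomposition $\Psi_k=F\rtimes_\theta\Z$ and read off a graph-of-groups splitting whose pieces assemble to a free group.)

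\emph{Lower bound $\Dist^{\Psi_k}_{\Lambda_k}\succeq A_k$.} This is the heart of the matter, and I would import the hydra machinery of \cite{DR}. Fighting $a_k^{\,n}$ takes $\mathcal{H}_k(n)\simeq A_k(n)$ strikes, and the identity $w\,t=(\ell t)\,\theta(w')$ (for $w=\ell w'$ with first letter $\ell$) iterates so that the successive first letters struck—each of the form $a_it$, hence a generator of $\Lambda_k$—record the entire battle. The aim is to package this into a family $g_n\in\Lambda_k$ with extrinsic length $d_{\Psi_k}(1,g_n)=O(n)$ whose intrinsic length $d_{\Lambda_k}(1,g_n)$ is at least a constant times $\mathcal{H}_k(n)$; the nested $t$-conjugations encoded in the words $u_i,v_i$ of Lemma~\ref{word change} are precisely the device that compresses $\theta$-images back to bounded size and keeps the extrinsic length linear. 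Granting such $g_n$, we get $\Dist^{\Psi_k}_{\Lambda_k}(Cn)\succeq\mathcal{H}_k(n)\simeq A_k(n)$, hence $\succeq A_k$. The step I expect to be the main obstacle is the matching lower bound on $d_{\Lambda_k}$: one must prove that the $\mathcal{H}_k(n)$-letter spelling of $g_n$ is essentially forced, i.e.\ that $g_n$ admits no substantially shorter expression in the generators of $\Lambda_k$ and the fight cannot be short-circuited. This rests on freeness together with a ``no-cancellation'' analysis of reduced words in $\Lambda_k$, and it is exactly the point at which one invokes that the relevant part of $\theta$ agrees with that of \cite{DR}.

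\emph{Upper bound $\Dist^{\Psi_k}_{\Lambda_k}\preceq A_k$.} The strategy is to mirror the (technical) proof in \cite{DR} that $\Dist^{G_k}_{H_k}\preceq A_k$: given $g\in\Lambda_k$ with $d_{\Psi_k}(1,g)\le n$, bound the number of $\Lambda_k$-generators needed to spell $g$ by running the hydra dynamics in reverse and controlling how fast an extrinsically short word can unfold, checking that $u$, $v$ and $\phi$ contribute only bounded overhead per strike. Transferring this estimate to the present, more elaborate, setting is the genuine difficulty—the introduction flags exactly this, claiming only the lower bound for the hyperbolic examples $\Gamma_k$—so in practice I would adapt the Dison--Riley counting argument to the enlarged generating set $a_0,\dots,a_k,b_1,\dots,b_l$, and if that route proves intractable I would settle for recording $\succeq A_k$ while deferring $\preceq A_k$ to the method of \cite{DR}.
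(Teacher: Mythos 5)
Your overall skeleton---freeness by shuffling the $t^{\pm1}$ to the front of a word in the generators, plus a hydra-produced family of short extrinsic witnesses for the lower bound---is the same as the paper's, and your fallback of recording only $\succeq A_k$ matches what the paper actually delivers: despite the ``$\simeq$'' in the statement, the proof in Section~\ref{freeness and distortion} establishes only the lower bound, consistent with the disclaimer in the introduction. But your freeness argument has a genuine gap at precisely its load-bearing step. ``Tracking the surviving highest-index letters'' does not by itself dispose of a reduced word containing an innermost pair $(a_kt)^{\pm 1}\,u\,(a_kt)^{\mp 1}$ with $u$ a word in the lower-index generators: since $\theta$ neither creates nor destroys occurrences of $a_k$, such a pair can have its two ${a_k}^{\pm1}$ cancel after the shuffling even though $u$ is nontrivial, and no count of highest-index letters excludes this. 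The paper needs two ingredients you don't supply. First, the inductive statement must be strengthened to Proposition~\ref{a power of t prop}---a word in $a_0t,\dots,a_kt,b_1,\dots,b_l$ representing an element of $\langle t\rangle$, not merely the identity, is freely trivial---because after shuffling, the sandwiched subword $u$ equals a \emph{power} of $t$, so the induction on $k$ only closes in this form. Second, in the case $w_0(a_kt)u(a_kt)^{-1}w_1$ the cancellation forces only that $\theta^{r+1}(a_k)\,\hat u$ freely equals $a_ka_{k-1}$, and ruling out $r\neq 0$ requires the positivity statement (Corollary~\ref{positivity for prop}), itself proved by running the elaborated hydra battle (Lemma~\ref{elaborated hydra}) and quoting the inductive hypothesis, followed by an exponent-sum contradiction; so the hydra enters the \emph{freeness} proof, not only the distortion estimate. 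The base case $k=1$ also needs an idea absent from your sketch: because $\theta(a_0)=ua_1v$ and $\theta(a_1)=a_0$, there is no persistent highest letter, and the paper instead changes basis to $\bar a_0=a_0t$, $\bar a_1=a_1t$, re-exhibiting $\Psi_1$ as free-by-cyclic with the new letters inside the fibre. (Your parenthetical Bass--Serre alternative is a dead end: the splitting $\Psi_k=F\rtimes_\theta\Z$ has Bass--Serre tree a line, on which the action sees only the exponent sum $\epsilon$.)

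On the lower bound your outline agrees with Proposition~\ref{lower bound on distortion}, but you misplace the key device. The mechanism keeping the extrinsic length linear is not the words $u_i,v_i$ of Lemma~\ref{word change}---those serve only the Tietze equivalence of $P_k$ and $Q_k$---but simply the choice of witness ${a_k}^n a_2ta_1{a_2}^{-1}{a_k}^{-n}$, of extrinsic length $2n+5$. Moreover, the Dison--Riley computation you propose to import uses $t^{-1}a_1t=a_1$ at the step converting ${a_2}^{-1}t^{\H_k(n)}a_2=(t{a_1}^{-1})^{\H_k(n)}$ into a word in the subgroup generators; in $\Psi_k$ the letter $a_1$ is no longer $t$-central, which is why the paper needs the new Lemmas~\ref{calculation1} and~\ref{calculation2} (the words $\tau_m$ and $v_n$), a point ``import the hydra machinery verbatim'' passes over. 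Finally, the ``forced spelling'' you flag as the main remaining obstacle is not a separate difficulty: once Proposition~\ref{a power of t prop} is in hand, each element of $\Lambda_k$ has a unique reduced spelling in the free basis, so it suffices to exhibit one expression and bound its reduced length from below, which the paper does by deleting all ${b_j}^{\pm1}$, replacing ${a_0}^{\pm1}$ by ${a_1}^{\pm1}$, and observing the result freely reduces to $u_{k,n}\,(a_2t)\,(a_1t)\,(a_2t)^{-1}\,{u_{k,n}}^{-1}$, of length $2\H_k(n)+3$.
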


\subsection{Towards a lower bound on distortion}

In what follows, when, for a word $u = u(a_0, \ldots, a_k, b_1, \ldots, b_l)$, we refer to $\theta^{r}(u)$, we mean the freely reduced word that equals $\theta^{r}(u)$  in $F(a_0, \ldots, a_k, b_1, \ldots, b_l)$.

The extreme  distortion in the hydra groups of \cite{DR} stemmed from the battle between Hercules and the hydra that we described in Section~\ref{intro}.  When studying $\Psi_k$ we will need the following more elaborate version of that battle.   A \emph{hydra} is  now a word on $a_0, a_1, \ldots, a_k, b_1, \ldots, b_l$ in which the $a_i$ only appear with positive exponents.  As before, Hercules fights a hydra by removing the first letter.  But in this version, the hydra only regenerates after an $a_i$ is removed, and that regeneration is:  each remaining $a_i$ and ${b_j}^{\pm 1}$ becomes $\theta(a_i)$    and $\theta({b_j}^{\pm 1})$, respectively.      Again, we consider Hercules victorious if, on sufficient repetition, the hydra is reduced to the empty word.  

Reprising the example from Section~\ref{intro}, Hercules defeats ${a_2}^3$  as follows: 
\begin{align*}
{a_2}^3   &  \  \to \      (a_2a_1)^2  \ \to \      a_0 a_2  a_1 a_0  \  \to \     a_2  a_1 a_0 ua_1v \  \to \    a_0 ua_1v   \theta(u) a_0 \theta(v)   \\    \ &  \to \    a_0 \theta(v)   \theta^2(u) u a_1  v \theta^2(v)   \  \to \    a_0  \theta(v) \theta^3(v)   \  \to \   \varepsilon.
\end{align*}
--- here, the steps in which Hercules removes a $b_j$ are not shown; the arrows indicate the progression from when an $a_i$ is about to be removed to when an $a_i$ next appears at the front of the word or the hydra becomes the empty word.  

The key point is that $a_0$ and the $b_j$ play no essential role in this battle; if we removed all $b_j$ and replaced all $a_0$ by $a_1$, we would have a battle of the original form.   Thus we have the following lemma.   [Recall that $\H(u)$  denotes the duration of the battle (of the original type from Section~\ref{intro} and \cite{DR}) against the hydra $u$.]   

\begin{lemma}  \label{elaborated hydra}
Hercules wins against all hydra $w$ and, in the battle, the number of times he removes an $a_i$  equals $\H(\bar{w})$ where $\bar{w}$ is the word obtained from $w$ by removing all   ${b_j}^{\pm 1}$ and replacing all $a_0$ by $a_1$.
\end{lemma}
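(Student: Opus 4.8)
The plan is to formalize the observation already made in the text --- that in the elaborated battle against a hydra $w$, the letters $a_0$ and the $b_j^{\pm1}$ are ``passengers'' that do not affect the combinatorics of how the $a_i$ with $i \geq 1$ are consumed. I would set up a homomorphism-like reduction map $\pi$ from the elaborated battle to the original battle of Section~\ref{intro}. Concretely, let $\pi$ send any word $w$ on $a_0, \ldots, a_k, b_1, \ldots, b_l$ to the word $\bar w$ obtained by deleting every $b_j^{\pm1}$ and rewriting every $a_0$ as $a_1$, landing in the set of positive words on $a_1, \ldots, a_k$ (the hydra alphabet of the original battle). The goal is to show that $\pi$ intertwines one ``macro-step'' of the elaborated battle (the removal of one $a_i$ together with the regeneration it triggers, skipping over any intervening $b_j$-removals) with exactly one step of the original battle.

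First I would check that $\pi$ behaves correctly under regeneration, i.e.\ that $\pi(\theta(x)) = \rho(\pi(x))$ for each generator $x$, where $\rho$ denotes the original hydra regeneration ($a_1 \mapsto a_1$, $a_i \mapsto a_i a_{i-1}$ for $i>1$). For $x = a_i$ with $i>1$ this is immediate since $\theta(a_i) = a_i a_{i-1}$ and both $a_i, a_{i-1}$ survive $\pi$ (with $a_1$ fixed under $\rho$ when $i=2$). For $x = a_1$ we have $\theta(a_1) = a_0$, and $\pi(a_0) = a_1 = \rho(a_1)$, so it matches. For $x = a_0$ we have $\theta(a_0) = u a_1 v$; since $u,v$ are words on the $b_j$ only, $\pi(\theta(a_0)) = a_1 = \rho(a_1) = \rho(\pi(a_0))$, so again it matches. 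For $x = b_j^{\pm1}$, $\theta(b_j^{\pm1})$ is a word on the $b_j$, so $\pi$ kills it entirely, consistent with $b_j$ being invisible to the original battle. Thus $\pi$ commutes with regeneration.

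Next I would track the removal steps. A single elaborated macro-step consists of: possibly first deleting a run of $b_j^{\pm1}$ letters at the front (invisible to $\pi$, no regeneration), then deleting one leading $a_i$, then applying $\theta$ to the rest. Under $\pi$, the deleted $b_j$ contribute nothing, and the deleted leading $a_i$ maps to the leading letter of $\bar w$ (either $a_1$, if the removed letter was $a_0$ or $a_1$, or $a_i$ otherwise), which is exactly the letter the original Hercules removes at that step; then $\theta$ on the tail corresponds via the commutation identity to $\rho$ on $\pi(\text{tail})$. Hence each $a_i$-removal in the elaborated battle corresponds to precisely one strike in the $\pi$-image battle, and $\pi(w)$ reaches $\varepsilon$ exactly when the original battle against $\bar w$ terminates. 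This gives both that the elaborated battle terminates (Hercules wins, since $\mathcal{H}(\bar w)$ is finite by \cite{DR}) and that the number of $a_i$-removals equals $\mathcal{H}(\bar w)$.

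The one genuine subtlety --- and the step I'd expect to require the most care --- is the bookkeeping around free reduction and the $b_j$-letters. Since the elaborated hydra words are taken freely reduced (as stated, $\theta^r(u)$ means the reduced word), one must confirm that free cancellation among the $b_j^{\pm1}$ (and between $a_0$'s image $ua_1v$ and neighbouring material) never cancels, creates, or reorders an $a_i$ with $i\geq 1$ in a way that would desynchronize the correspondence. The crucial facts making this safe are that the $a_i$ appear only with positive exponent (so no $a_i$ can cancel against another) and that $\theta$ sends each $a_i$ to a word whose $a$-letters all have positive exponent; cancellation is therefore confined to the $b_j$-blocks, which $\pi$ discards. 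I would verify this by an induction on the number of macro-steps, carrying the invariant that $\pi$ of the current reduced elaborated word equals the current word in the original battle. Once this invariant is maintained, the lemma follows immediately.
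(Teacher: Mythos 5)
Your proposal is correct and formalizes exactly the argument the paper intends: the paper states the lemma as an immediate consequence of the observation that $a_0$ and the ${b_j}^{\pm 1}$ play no essential role, which is precisely your intertwining map $\pi$ with $\pi\circ\theta=\rho\circ\pi$ on generators. Your extra care about free reduction (positivity of the $a_i$ confining cancellation to the $b_j$--blocks) is a sound elaboration of the same point, not a different route.
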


Consideration of the original  battle between Hercules and the hydra led to the result that, for all $k,n \geq 1$, there is a positive word $u_{k,n} = u_{k,n}(a_1t, \ldots, a_kt)$ of length $\H_k(n)$ that equals ${a_k}^n t^{\H_k(n)}$ in $G_k$.  (This is Lemma~5.1 in \cite{DR}.)   The reason is that the pairing off of a $t$ with an initial $a_i$ in a positive word on $a_1, \ldots, a_k$ corresponds to a decapitation, and the conjugation by $t$ that moves that $t$ into place from the right--hand end causes \emph{regeneration} for the remainder of the word.  For example $\H_2(3)=7$ and
\begin{align*}
{a_2}^3 t^{7}   &  \  =  \    (a_2t) \   t^{-1} {a_2}^2 t  \  t^6  \ = \      (a_2t) \    (a_2a_1)^2  \, t^6    \  = \    (a_2t) (a_2t)  \   t^{-1}   a_1{a_2a_1}  t  \ t^5   \\  \ &  = \     \cdots \ = \  (a_2t)(a_2t)(a_1t)(a_2t)(a_1t)(a_1t)(a_1t) \ = \  u_{2,3}.
\end{align*}

In the corresponding calculation for $\Psi_k$,   only the $a_i$ get paired with $t$, and on  each  of the $\H_k(n)$ times  that happens, the subsequent conjugation by $t$ can increase length by a  factor  $C$ which depends only on $\phi$, $\ell(u)$ and $\ell(v)$.   So ---
\begin{lemma}  \label{big triangle}
There exists $C>0$ such that for all $k,n \geq 1$, there is a  word $\hat{u}_{k,n} =  \hat{u}_{k,n}(a_0t, \ldots, a_kt, b_1, \ldots, b_l)$ that equals ${a_k}^n t^{\H_k(n)}$ in $\Psi_k$ and has the properties that $$\H_k(n) \ \leq \  \ell(\hat{u}_{k,n})  \ \leq \  C^{\mbox{$\H_k(n)$}}n$$ and all the $(a_it)$ it contains have positive exponents.
 \end{lemma}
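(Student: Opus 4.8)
The plan is to mirror, for $\Psi_k$, the length--$\H_k(n)$ factorisation of $a_k^n t^{\H_k(n)}$ established for the hydra groups $G_k$ in \cite{DR}, but now tracking the richer regeneration rule $\theta$. The observation driving everything is that pairing the leading $a_i$ of a hydra with a $t$ to its right, via $a_i\, r\, t^m = (a_i t)\,(t^{-1} r t)\, t^{m-1} = (a_i t)\,\theta(r)\, t^{m-1}$, realises algebraically exactly one decapitation--with--regeneration of the elaborated battle of Lemma~\ref{elaborated hydra}; by contrast a leading $b_j^{\pm 1}$ may simply be peeled off the front, consuming no $t$ and leaving the remainder unregenerated, which matches the rule that only removal of an $a_i$ regenerates the hydra.

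First I would build $\hat u_{k,n}$ by running this process against $a_k^n$. Put $N := \H_k(n)$ and start from $a_k^n t^N$ in $\Psi_k$. Repeatedly examine the leading letter of the freely reduced hydra word standing to the left of the surviving power of $t$: if it is $a_i$, rewrite $a_i\, r\, t^m = (a_i t)\,\theta(r)\, t^{m-1}$, recording the generator $a_i t$ and decrementing the $t$--power; if it is $b_j^{\pm 1}$, record that letter and change neither the $t$--power nor the remainder. Because the $a_i$ occur only with positive exponent and $\theta$ sends positive $a$--words to positive $a$--words, this invariant persists throughout, so every recorded $a_i t$ carries a positive exponent, as required. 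By Lemma~\ref{elaborated hydra}, Hercules removes an $a_i$ exactly $\H(\overline{a_k^n}) = \H(a_k^n) = N$ times before the hydra empties; since each such removal consumes precisely one $t$ and no other step consumes a $t$, the $t$--power reaches $0$ just as the hydra vanishes. The recorded generators therefore multiply to a word $\hat u_{k,n} = \hat u_{k,n}(a_0t,\dots,a_kt,b_1,\dots,b_l)$ with $a_k^n t^N = \hat u_{k,n}$ in $\Psi_k$.

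The length estimates are then a counting argument. Since $\ell(\hat u_{k,n})$ equals the total number of letters stripped, and at least the $a_i$--strippings are counted, $\ell(\hat u_{k,n}) \ge N = \H_k(n)$, giving the lower bound. For the upper bound set $C := \max_g \ell(\theta(g))$, the maximum taken over the free generators $g \in \{a_0,\dots,a_k,b_1,\dots,b_l\}$; as $\ell(\theta(a_0)) = \ell(u)+\ell(v)+1$, $\ell(\theta(a_i)) \le 2$ and $\ell(\theta(b_j)) = \ell(\phi(b_j))$, this $C$ depends only on $\phi$, $\ell(u)$ and $\ell(v)$, and $\ell(\theta(W)) \le C\,\ell(W)$ for every $W$ because free reduction never lengthens a word. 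Writing $L_m$ for the length of the hydra just before its $m$--th $a_i$--stripping, removing one $a_i$ and applying $\theta$ gives $L_{m+1} \le C L_m$ (the intervening $b_j$--strippings only shorten), so $L_m \le C^m n$. The number of $b_j$'s stripped after the $m$--th $a_i$--removal is at most $\ell(\theta(r)) \le C L_m \le C^{m+1} n$, whence the total number of stripped letters is at most $N + n\sum_{m=0}^{N-1} C^{m+1}$, a quantity bounded by $\widehat C^{\,N} n$ once $\widehat C$ is chosen large enough in terms of $C$ (using $N = \H_k(n) \ge 1$). This yields $\ell(\hat u_{k,n}) \le \widehat C^{\,\H_k(n)} n$ with $\widehat C$ depending only on $\phi$, $\ell(u)$ and $\ell(v)$.

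The one genuinely delicate point I expect is the synchronisation in the middle step: one must know that the supply of $t$'s is neither spent early nor left over at the end, and this is exactly the content of the equality ``number of $a_i$--removals $= \H_k(n)$'' furnished by Lemma~\ref{elaborated hydra}. The remaining work --- verifying the positivity invariant and running the geometric bound on how fast $\theta$ inflates length --- is routine.
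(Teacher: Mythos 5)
Your proposal is correct and follows essentially the same route as the paper, which proves the lemma by exactly this sketch: pair each removed $a_i$ with one $t$ so that the conjugation $t^{-1}rt=\theta(r)$ enacts regeneration, invoke Lemma~\ref{elaborated hydra} to see this happens precisely $\H_k(n)$ times, and note each such step inflates length by at most a factor $C$ depending only on $\phi$, $\ell(u)$ and $\ell(v)$. Your write-up simply makes explicit the synchronisation of the $t$--supply, the positivity invariant, and the geometric counting that the paper leaves as a one--line remark.
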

 
This and our next two lemmas will be components of a calculation that will yield Proposition~\ref{lower bound on distortion} (the analogue of  Proposition~5.2 in \cite{DR}), which will be  the key to establishing a lower bound on the distortion of $\Lambda_k$ in $\Psi_k$.

A simple calculation yields ---

\begin{lemma}  \label{calculation1}   $t^{-m}a_1t^{m+1} = \tau_m$ in $\Psi_k$ for all $m \geq 2$ where 
\begin{equation*}
\tau_m   \  :=  \    
\begin{cases}
a_0t & \text{for } m = 1 \\  
\phi^{m-2}(u) \cdots  \phi^{2}(u)   u  \   (a_1t)  \ \phi(v) \phi^3(v) \cdots \phi^{m-1}(v) & \text{for even } m \geq 2 \\
\phi^{m-2}(u) \cdots  \phi^{3}(u) \phi(u)     \   (a_0t)  \ \phi^2(v) \phi^4(v) \cdots \phi^{m-1}(v) & \text{for odd } m >  2.
\end{cases}
 \end{equation*}
 \end{lemma}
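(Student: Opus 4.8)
The plan is to induct on $m$, using the one structural feature of $\Psi_k$: conjugation by $t$ realises $\theta$, so $t^{-1}xt=\theta(x)$ for every generator $x$. Concretely I will use $t^{-1}a_1t=a_0$, $t^{-1}a_0t=ua_1v$, and $t^{-1}wt=\phi(w)$ for every word $w$ in $b_1,\dots,b_l$; the last I rewrite as the \emph{push rule} $wt=t\,\phi(w)$. The whole argument rests on the observation that, directly from the definition, $\tau_m:=t^{-m}a_1t^{m+1}$ obeys the recursion
\begin{equation*}
\tau_{m+1}\ =\ t^{-1}\tau_m t ,
\end{equation*}
so it suffices to verify that the displayed words satisfy this same recursion from the correct base cases, and then appeal to induction.

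First I would check the base cases directly: $\tau_1=t^{-1}a_1t^2=a_0t$, and then $\tau_2=t^{-1}a_0t\cdot t=ua_1vt=u\,(a_1t)\,\phi(v)$ after a single use of the push rule, which is the even case at $m=2$.

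For the inductive step I would conjugate the closed form by $t$, which replaces each $a$-letter by its $\theta$-image and each $b$-word by its $\phi$-image while leaving the internal $t$ alone, and then slide the resulting trailing $t$ back to just after the central $a$-letter with the push rule. Starting from an even expression $\tau_m=P\,(a_1t)\,S$ with $P=\phi^{m-2}(u)\cdots u$ and $S=\phi(v)\cdots\phi^{m-1}(v)$, this gives $\tau_{m+1}=\phi(P)\,(a_0t)\,\phi(S)$; since the central $a_1$ maps to the single letter $a_0$, no new $u,v$ appear, and reading off exponents yields the odd formula at $m+1$. Starting from an odd expression $\tau_m=P'\,(a_0t)\,S'$, the central factor becomes $t^{-1}a_0t\cdot t=ua_1vt$, which after pushing the $t$ leftward past $v$ becomes $u\,(a_1t)\,\phi(v)$; absorbing the new $u$ and $\phi(v)$ into $\phi(P')$ and $\phi(S')$ reproduces the even formula at $m+1$.

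The only real content is the index bookkeeping: under $\phi$ each exponent in the two strings increases by one, and the extra factors $\phi^0(u)=u$ and $\phi^0(v)\mapsto\phi(v)$ materialise precisely when the central letter switches from $a_0$ back to $a_1$. I would therefore track the two index sets $\{0,2,\dots,m-2\}$ and $\{1,3,\dots,m-1\}$ explicitly and confirm they interchange as claimed; this parity bookkeeping is the one place an error could slip in. Everything else is forced by the recursion $\tau_{m+1}=t^{-1}\tau_m t$ together with the push rule, so no independent computation of the reduced form of $\theta^m(a_1)$ is required.
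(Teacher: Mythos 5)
Your proof is correct: the recursion $\tau_{m+1}=t^{-1}\tau_m t$, the push rule $wt=t\,\phi(w)$ (valid because $u$ and $v$ are words on $b_1,\dots,b_l$, on which $\theta$ restricts to $\phi$), and the base cases $\tau_1=a_0t$ and $\tau_2=u\,(a_1t)\,\phi(v)$ all check out, and the parity bookkeeping of the index sets $\{0,2,\dots,m-2\}$ and $\{1,3,\dots,m-1\}$ matches the stated formula in both directions of the switch between $a_0$ and $a_1$. The paper gives no written proof --- it introduces the lemma with ``A simple calculation yields'' --- and your induction is precisely that calculation, so this is essentially the paper's intended approach, now made explicit.
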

 
 This combines with  $$t^{-1}(a_1t^{-1})^n  \   =  \ (t^{-1}a_1t^2) \, ( t^{-3} a_1 t^4) \, ( t^{-5} a_1 t^6) \cdots ( t^{1-2n} a_1 t^{2n}) t^{-2n-1}$$ to give ---
 
\begin{lemma}  \label{calculation2}
There is a constant $C>0$, depending only on $\phi$, $\ell(u)$ and $\ell(v)$, such that for all $n \geq 0$, there is a word $v_n = v_n(a_0t, a_1t, b_1, \ldots, b_l)$ such that   $t^{-1} (a_1 t^{-1})^n = v_n t^{-2n-1}$ in $\Psi_k$, the number of $(a_1t)$ contained in $v_n$  is $n$ and all have positive exponent, and $n \leq \ell(v_n) \leq C^n$.  
  \end{lemma}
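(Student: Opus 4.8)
The plan is to combine the identity displayed just before the statement with the explicit formula for $\tau_m = t^{-m}a_1 t^{m+1}$ from Lemma~\ref{calculation1}, and to track lengths through the substitution. First I would rewrite the displayed identity as
\begin{equation*}
t^{-1}(a_1 t^{-1})^n \ = \ \tau_1 \, \tau_3 \, \tau_5 \cdots \tau_{2n-1} \, t^{-2n-1},
\end{equation*}
where I am using that the $m$-th parenthesized factor $t^{1-2m}a_1 t^{2m}$ on the right-hand side is exactly $t^{-(2m-1)} a_1 t^{(2m-1)+1} = \tau_{2m-1}$. Defining $v_n := \tau_1 \tau_3 \cdots \tau_{2n-1}$ then gives the asserted equation $t^{-1}(a_1t^{-1})^n = v_n t^{-2n-1}$ immediately, since only the $t^{-2n-1}$ tail carries surplus $t$-exponent.

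Next I would verify that $v_n$ has the claimed form and properties. By Lemma~\ref{calculation1}, each $\tau_{2m-1}$ (for $m \geq 2$, i.e.\ odd index $>1$) is a product of the shape $\phi^{*}(u)\cdots\phi(u)\,(a_0t)\,\phi^2(v)\cdots\phi^{*}(v)$, while $\tau_1 = a_0t$; thus every factor contains exactly one occurrence of an $(a_it)$-letter, namely a single $(a_0t)$, appearing with positive exponent. Since $\phi$ is an automorphism of $F(b_1,\ldots,b_l)$, every $\phi^j(u)$ and $\phi^j(v)$ is a word on the $b_j$ alone, so $v_n$ is genuinely a word on $a_0t, a_1t, b_1,\ldots,b_l$ with its $n$ copies of $(a_1t)$-type letters (here all of type $a_0t$) occurring positively. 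This yields the count ``$n$ occurrences, all positive,'' and the lower bound $n \leq \ell(v_n)$ is then clear.

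The only quantitative step requiring care is the upper bound $\ell(v_n) \leq C^n$. Let $L := \max\{\ell(u),\ell(v)\}$ and let $\lambda$ be the maximal length of the $\phi$-image of a single generator, so $\ell(\phi^j(w)) \leq \lambda^{j}\ell(w)$ for any word $w$. Then $\ell(\tau_{2m-1})$ is bounded by a geometric sum $\sum_{j\leq 2m-2}\lambda^{j} L + 1$, which is $O(\lambda^{2m})$; summing over $m = 1,\ldots,n$ gives $\ell(v_n) = \sum_{m=1}^n \ell(\tau_{2m-1}) = O(\lambda^{2n})$, which is of the form $C^n$ for a suitable constant $C$ depending only on $\phi$, $\ell(u)$ and $\ell(v)$ (one can take $C$ a fixed multiple of $\lambda^2$). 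I expect this geometric-growth estimate to be the main—though routine—obstacle: the subtlety is simply confirming that the repeated application of $\phi$ inside the nested $\tau_m$ cannot produce more than exponential blow-up, which is guaranteed precisely because $\phi$ is a fixed automorphism independent of $n$.
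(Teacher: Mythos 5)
Your proof is correct and takes exactly the route the paper intends: the displayed identity before the lemma factors $t^{-1}(a_1t^{-1})^n$ as $\tau_1\,\tau_3\cdots\tau_{2n-1}\,t^{-2n-1}$, and substituting the formulas of Lemma~\ref{calculation1} together with the geometric estimate $\ell(\phi^j(w))\leq\lambda^j\ell(w)$ yields the word $v_n$ and the bounds $n\leq\ell(v_n)\leq C^n$, just as you argue. Your parenthetical observation that the $n$ positive letters are all of type $(a_0t)$ (since only odd-index $\tau_m$ occur) is also accurate --- the lemma's phrase ``the number of $(a_1t)$'' is best read as counting letters $(a_it)$ with $i\in\{0,1\}$, which is all that is needed when the lemma is applied in Proposition~\ref{lower bound on distortion}.
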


\begin{prop} \label{lower bound on distortion}
For all $k \geq 2$ and $n \geq 1$, there is a reduced word of length at least $2 \H_k(n) +3$ on   $a_0t, a_1t, \ldots, a_kt, b_1, \ldots, b_l$, that equals ${a_k}^n a_2  ta_1 {a_2}^{-1} {a_k}^{-n}$ in  $\Psi_k$.  
\end{prop}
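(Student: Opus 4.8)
The plan is to establish two things the statement requires: that $g:={a_k}^n a_2 t a_1 {a_2}^{-1}{a_k}^{-n}$ lies in $\Lambda_k$, and that $d_T(1,g)\ge 2\H_k(n)+3$, so that any geodesic word on the generators (necessarily reduced) is a reduced word of the required length. Everything hinges on the identity, valid in any group,
\[
a_2 t a_1 {a_2}^{-1} \ = \ (a_2t)(a_1t)(a_2t)^{-1},
\]
since $(a_2t)(a_1t)(a_2t)^{-1}=a_2ta_1tt^{-1}{a_2}^{-1}$. Thus the central block $M_0:=a_2ta_1{a_2}^{-1}$ is a length-$3$ word in the free generators of $\Lambda_k$, and the whole question concerns its conjugate by ${a_k}^n$.

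\emph{Membership.} I would use Lemma~\ref{big triangle} to write ${a_k}^n=\hat u_{k,n}t^{-h}$, where $h:=\H_k(n)$ and $\hat u_{k,n}\in\Lambda_k$, so that $g=\hat u_{k,n}\,(t^{-h}M_0t^{h})\,\hat u_{k,n}^{-1}$. It then remains to rewrite $t^{-h}M_0t^{h}$ in the generators of $\Lambda_k$. Conjugating the three letters of $M_0$ separately, Lemma~\ref{calculation1} gives $t^{-h}(a_1t)t^{h}=\tau_h$, while a short computation gives $t^{-h}(a_2t)t^{h}=a_2(a_1t^{-1})^{h-1}a_1t^{h}$, which Lemma~\ref{calculation2} (for the $(a_1t^{-1})$–string) and Lemma~\ref{calculation1} (for the trailing $a_1$) convert into $(a_2t)\,v_{h-1}\,\tau_{2h-1}\,t^{-h}$. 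Assembling the three factors, cancelling the internal powers of $t$, and using $t^{-h}\tau_ht^{h}=\tau_{2h}$ yields
\[
t^{-h}M_0t^{h} \ = \ (a_2t)\,v_{h-1}\,\tau_{2h-1}\,\tau_{2h}\,\tau_{2h-1}^{-1}\,v_{h-1}^{-1}\,(a_2t)^{-1},
\]
all of whose factors lie in $\Lambda_k$; hence $g\in\Lambda_k$. This is the $\Psi_k$–analogue of the computation behind Proposition~5.2 of \cite{DR}, and Lemmas~\ref{calculation1} and~\ref{calculation2} supply precisely the rewriting that is unnecessary in the hydra group $G_k$, where $M_0$ commutes with $t$.

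\emph{The lower bound.} Rather than track cancellation inside $\Lambda_k$—where the $b_j$ and the block above could in principle collapse—I would push the problem into $G_k$ through the retraction
\[
\pi:\Psi_k\to G_k,\qquad b_j\mapsto 1,\quad a_0\mapsto a_1,\quad a_i\mapsto a_i\ (1\le i\le k),\quad t\mapsto t,
\]
which respects the defining relations of $\Psi_k$: killing the $b_j$ sends $u,v\mapsto 1$, so both $t^{-1}a_0t=ua_1v$ and $t^{-1}a_1t=a_0$ map to $t^{-1}a_1t=a_1$, and the remaining relations are immediate. Since $\pi$ carries each generator of $\Lambda_k$ to a generator of $H_k=\langle a_1t,\dots,a_kt\rangle$ or to $1$, every length-$L$ word on the generators of $\Lambda_k$ representing $g$ maps to a word of length $\le L$ representing $\pi(g)$ in $H_k$; hence $d_T(1,g)\ge\ell_{H_k}(\pi(g))$. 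Now $\pi(g)={a_k}^n M_0{a_k}^{-n}$, and $\pi$ applied to Lemma~\ref{big triangle} gives ${a_k}^n=u_{k,n}t^{-h}$ with $u_{k,n}=\pi(\hat u_{k,n})$ a \emph{positive} word in the $a_it$; being positive it is reduced, and comparing $t$–exponents forces its length to be exactly $h=\H_k(n)$. Because $M_0=(a_2{a_1}^2{a_2}^{-1})t$ commutes with $t$ in $G_k$ (its prefix $a_2{a_1}^2{a_2}^{-1}$ being $t$–invariant there),
\[
\pi(g) \ = \ u_{k,n}\,(t^{-h}M_0t^{h})\,u_{k,n}^{-1} \ = \ u_{k,n}\,(a_2t)(a_1t)(a_2t)^{-1}\,u_{k,n}^{-1},
\]
and the right-hand side is reduced: $u_{k,n}$ ends in a positive letter $a_jt$ and the block begins with the positive letter $a_2t$, while the block ends in $(a_2t)^{-1}$ followed by the negative letter $(a_jt)^{-1}$, so no cancellation occurs at either junction. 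Hence $\ell_{H_k}(\pi(g))=2h+3$ and $d_T(1,g)\ge 2\H_k(n)+3$.

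The step I expect to be the real work is the membership calculation: unlike in $G_k$, the conjugate $t^{-h}M_0t^{h}$ is a genuinely nontrivial element of $\Lambda_k$, and expressing it requires the bookkeeping packaged in Lemmas~\ref{calculation1} and~\ref{calculation2}. The lower bound, by contrast, becomes clean once $\pi$ is available, precisely because passing to the free group $H_k$ discards the $b_j$ and the intricate middle, leaving the positivity of $u_{k,n}$ (from Lemma~\ref{big triangle}) to rule out cancellation and pin the length at $2\H_k(n)+3$.
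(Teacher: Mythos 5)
Your proposal is correct, and its membership half is essentially the paper's computation, while the lower-bound half is genuinely repackaged. For membership, the paper works in one block: it rewrites $t^{-1}a_2t=a_2a_1$ as ${a_2}^{-1}ta_2=ta_1^{-1}$, writes ${a_k}^na_2=\hat u_{k,n}\,a_2\,(ta_1^{-1})^{-h}$ with $h=\H_k(n)$, and arrives at $\hat u_{k,n}(a_2t)\,v_{h}\,\tau_{2h}\,v_{h}^{-1}(a_2t)^{-1}\hat u_{k,n}^{-1}$ (the displayed subscripts $2n$ and exponent $-2n-1$ in the paper are typos for $2h$ and $-2h-1$); your letter-by-letter conjugation instead yields $\hat u_{k,n}(a_2t)\,v_{h-1}\tau_{2h-1}\,\tau_{2h}\,\tau_{2h-1}^{-1}v_{h-1}^{-1}(a_2t)^{-1}\hat u_{k,n}^{-1}$, which represents the same element since $v_h=v_{h-1}\tau_{2h-1}$ in $\Psi_k$ --- a cosmetic difference. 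For the lower bound, the paper argues at the level of the specific word: delete all ${b_j}^{\pm1}$, replace ${a_0}^{\pm1}$ by ${a_1}^{\pm1}$, freely reduce, and observe the result is $u_{k,n}(a_2t)(a_1t)(a_2t)^{-1}u_{k,n}^{-1}$ of length exactly $2h+3$. Your retraction $\pi\colon\Psi_k\to G_k$ is precisely this deletion/replacement promoted to a homomorphism (your verification on the relators is right, since $u,v$ are words on the $b_j$ alone), and your computation of the reduced form of $\pi(g)$ is done intrinsically via the $\theta$--invariance of $a_2{a_1}^2{a_2}^{-1}$ in $G_k$ rather than by tracking the image of the constructed word; your exponent count pinning $\ell(\pi(\hat u_{k,n}))=h$ is legitimate because Lemma~\ref{big triangle} guarantees all $(a_it)$ in $\hat u_{k,n}$ occur positively. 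What your packaging buys: invoking the freeness of $H_k=\langle a_1t,\ldots,a_kt\rangle$ from \cite{DR}, you get $d_T(1,g)\ge 2\H_k(n)+3$ for \emph{every} word representing $g$ at once, so the distortion estimate no longer depends on Proposition~\ref{a power of t prop}; the paper's Proposition~\ref{lower bound on distortion} by itself only exhibits one long reduced word, and the Conclusion must appeal to Proposition~\ref{a power of t prop} (uniqueness of reduced words in $\Lambda_k$) to make that word geodesic. The cost is the imported input from \cite{DR} --- which the paper quotes anyway --- and of course Proposition~\ref{a power of t prop} is still needed elsewhere for the freeness and rank claims of Proposition~\ref{distortion and freeness prop}.
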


\begin{proof} After rewriting the relation $t^{-1} a_2 t  = a_2 a_1$  as ${a_2}^{-1} t a_2 = t {a_1}^{-1}$,  we see that   ${a_2}^{-1}  t^{\H_k(n)} a_2 = (t {a_1}^{-1})^{\H_k(n)}$.  So 
$${a_k}^n a_2 =     \hat{u}_{k,n} a_2  (t {a_1}^{-1})^{-\H_k(n)}$$
for $\hat{u}_{k,n}$ as in  Lemma~\ref{big triangle}.
This gives the first of the equalities   
\begin{align*}
{a_k}^n a_2  ta_1 {a_2}^{-1} {a_k}^{-n}  & \ = \    \hat{u}_{k,n} a_2 (t {a_1}^{-1})^{- \H_k(n)} ta_1 (t {a_1}^{-1})^{\H_k(n)}  {a_2}^{-1}  {\mbox{$\hat{u}_{k,n}$}}^{-1}  \\
& \ = \    \hat{u}_{k,n} (a_2t) \  t^{-1}  (t {a_1}^{-1})^{- \H_k(n)} ta_1 (t {a_1}^{-1})^{\H_k(n)} t  \  {(a_2t)}^{-1}  {\mbox{$\hat{u}_{k,n}$}}^{-1}  \\
& \ = \   \hat{u}_{k,n} (a_2t) \   v_{\H_k(n)} t^{-2n-1}  ta_1  t^{2n+1} {v_{\H_k(n)}}^{-1}    \  {(a_2t)}^{-1} {\mbox{$\hat{u}_{k,n}$}}^{-1} \\
& \ = \   \hat{u}_{k,n} (a_2t) \   v_{\H_k(n)}  \tau_{2n} {v_{\H_k(n)}}^{-1}    \  {(a_2t)}^{-1}  {\mbox{$\hat{u}_{k,n}$}}^{-1}.
\end{align*} 
The second is a free equality and  the third and fourth are applications of Lemmas~\ref{calculation2} and \ref{calculation1}, respectively.

This calculation arrives at a word on  $a_0t, a_1t, \ldots, a_kt, b_1, \ldots, b_l$, that equals ${a_k}^n a_2  ta_1 {a_2}^{-1} {a_k}^{-n}$ in  $\Psi_k$.  This word may not be freely reduced, but if we delete all the ${b_j}^{\pm 1}$ it contains, replace all ${a_0}^{\pm 1}$ by ${a_1}^{\pm 1}$, and then freely reduce (i.e. cancel away all $(a_it)^{\pm 1}(a_it)^{\mp 1}$ subwords), we  get $u_{k,n} \,  (a_2t) \,  (a_1 t) \, ({a_2}t)^{-1} \, {u_{k,n}}^{-1}$, which has length $2 \mathcal{H}_k(n) + 3$.     
\end{proof}

\subsection{Freeness and rank}  

The result of this section is: 

\begin{prop} \label{freeness}  
The subgroup $\Lambda_k$ is free of rank $k+l +1$. 
\end{prop}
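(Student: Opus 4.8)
The group $\Psi_k = F(a_0,\ldots,a_k,b_1,\ldots,b_l)\rtimes_\theta \Z$ is free-by-cyclic, so it is itself torsion-free and indeed the subgroup $\Lambda_k$ lives inside the larger free-by-cyclic group. I want to show $\Lambda_k := \langle a_0t,\ldots,a_kt,b_1,\ldots,b_l\rangle$ is free of rank $k+l+1$. The cleanest route is to exhibit $\Lambda_k$ as a free factor, or a visibly free subgroup, of a free group inside $\Psi_k$. The obvious ambient free group is the fibre $N := F(a_0,\ldots,a_k,b_1,\ldots,b_l)$. First I would observe that $\Lambda_k$ is \emph{not} contained in $N$ (the generators carry a $t$), so I instead work with a retraction or a normal-form argument. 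The standard device for subgroups generated by elements of the form $x_it$ in an ascending HNN / free-by-cyclic setting is: consider the homomorphism $\Psi_k \to \Z$ sending $t\mapsto 1$ and $N\mapsto 0$; then each generator $a_it$ maps to $1$ while each $b_j$ maps to $0$. This shows the $b_j$ generate a subgroup on which the abelianisation-to-$\Z$ map is trivial and the $a_it$ all have image $1$.

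The core step is a normal-form / ping-pong argument. The plan is to prove that there are \emph{no} nontrivial reduced relations among $a_0t,\ldots,a_kt,b_1,\ldots,b_l$ by tracking what happens to a reduced word in these generators when pushed into $N\rtimes_\theta\Z$. Concretely, take a nonempty freely reduced word $W$ in the generators $a_0t,\ldots,a_kt,b_1,\ldots,b_l$ and their inverses, and suppose $W=1$ in $\Psi_k$. Projecting to the $\Z$-factor, the total $t$-exponent must vanish, which constrains the pattern of $(a_it)^{\pm1}$ occurrences. I would then use the \emph{hydra mechanism} already set up in the paper: the essential content of Lemmas~\ref{big triangle}, \ref{calculation2}, and \ref{calculation1} is that the $a_it$-generators behave, under the $\theta$-action, exactly as in the original hydra group $G_k$, with the $a_0$ and $b_j$ contributing no cancellation that could kill an $a_i$. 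The phrase ``$a_0$ and the $b_j$ play no essential role'' (Lemma~\ref{elaborated hydra}) is precisely what I want to leverage: the retraction $\Psi_k\to G_k\times F(b_1,\ldots,b_l)$ (or the map killing $b_j$ and sending $a_0\mapsto a_1$) should carry $\Lambda_k$ onto $H_k\times F(b_1,\ldots,b_l)$, and $H_k$ is already known to be free of rank $k$ from \cite{DR}.

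So the strategy I favour is: (i) show the $b_j$ generate a free group $F(b_1,\ldots,b_l)$ that is a free factor complement, using that $\phi$ is an automorphism of $F(b_1,\ldots,b_l)$ and that conjugation by $t$ preserves this subgroup; (ii) show the images of $a_0t,\ldots,a_kt$ are free of rank $k+1$, by mapping into the hydra group $G_k$ (where $H_k=\langle a_1t,\ldots,a_kt\rangle$ is free of rank $k$) and checking that adjoining $a_0t$ raises the rank by exactly one rather than introducing a relation; and (iii) combine these to get freeness of rank $k+l+1$ for the whole of $\Lambda_k$, ruling out interaction between the two families of generators via the retraction separating them. The freeness of the $a_it$-part essentially reduces to the $G_k$ statement from \cite{DR} together with the observation that $a_0t$ and $a_1t$ are related through $\theta(a_1)=a_0$, i.e.\ $t^{-1}a_1t=a_0$, which gives $a_0t = t^{-1}(a_1t)t$, a conjugate; this needs care to confirm it genuinely adds a rank rather than collapsing.

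\textbf{Main obstacle.}
The step I expect to be hardest is (ii): proving that $a_0t,a_1t,\ldots,a_kt$ are free of rank $k+1$ rather than $k$. Since $a_0 = \theta(a_1)= t^{-1}a_1t$, we have $a_0 t = t^{-1}(a_1t)t = t^{-1}a_1 t\cdot t$, and one must verify this new generator is genuinely independent and does not satisfy a relation with the $a_it$ once the full twisting by $u,v$ in $\theta(a_0)=ua_1v$ is taken into account. The delicate point is that $u,v$ are words in the $b_j,c_j,d$ (for the specific $\Gamma_k$) but here abstract words in $b_1,\ldots,b_l$, so the interaction between $a_0$ and the $b_j$ via $\theta(a_0)=ua_1v$ must be shown not to produce hidden relations. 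I would handle this by a careful induction on word length using the free-by-cyclic normal form $n t^m$ with $n\in N$ reduced, tracking the leading $a_i$ of the $N$-part through successive applications of $\theta^{\pm1}$ and invoking the hydra decapitation picture to guarantee that an $a_k$-letter (the ``deepest'' generator) can never be cancelled — exactly the non-cancellation that powers the distortion lower bound in Proposition~\ref{lower bound on distortion}.
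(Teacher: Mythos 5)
Your headline strategy (i)--(iii) has a genuine gap: the separating map you want does not exist, and the honest half of it cannot see the relations you must exclude. Since $t^{-1}b_jt=\phi(b_j)$ in $\Psi_k$, any homomorphism to a direct product $G_k\times F(b_1,\ldots,b_l)$ carrying the $b_j$ to the second factor would force $\phi$ to become inner on the image of the $b_j$ (conjugation by the second coordinate of the image of $t$), which fails for a general automorphism $\phi$ and in particular for $\Gamma_k$, where $\phi$ already acts nontrivially on the abelianisation; and the complementary projection killing the $a_i$ is inconsistent with $t^{-1}a_0t=ua_1v$ unless $uv=1$ in $F(b_1,\ldots,b_l)$. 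The only map of the kind you invoke is $\Psi_k\to G_k$ killing the $b_j$ and sending $a_0\mapsto a_1$; but it sends both $a_0t$ and $a_1t$ to $a_1t$, so it is visibly non-injective on $\Lambda_k$ (the nontrivial element $(a_0t)(a_1t)^{-1}=a_0{a_1}^{-1}$ dies), and a surjection onto a free group says nothing about freeness of the source. Worse, any map killing the $b_j$ also kills $u$ and $v$, i.e.\ exactly the interaction through which $\theta(a_0)=ua_1v$ could create mixed relations between the $a_it$ and the $b_j$, so no combination of these quotients can rule such relations out. Everything therefore rests on your step (ii), and your sketch of it omits the ideas that make the induction work.

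First, the statement must be strengthened before it can be inducted on: the paper proves (Proposition~\ref{a power of t prop}) that any reduced word in $a_0t,\ldots,a_kt,b_1,\ldots,b_l$ representing an element of $\langle t\rangle$ --- not merely the identity --- is freely trivial. This is forced: after shuffling the $t^{\pm1}$ to the front (using that $\theta^{\pm1}$ neither creates nor destroys $a_k$'s), an innermost pair $(a_kt)^{\pm1}\,u\,(a_kt)^{\mp1}$ has its intervening subword $u$ representing a power of $t$, not $1$, so induction on the unstrengthened statement does not close. Second, your guiding slogan that ``an $a_k$-letter can never be cancelled'' is false: in any such relation the $a_k$'s cancel in pairs, and the proof analyses precisely how. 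In the hard case $w_0(a_kt)u(a_kt)^{-1}w_1$ with $t$-exponent $r>0$ in $u$, the contradiction comes not from non-cancellation but from positivity of exponents (Corollary~\ref{positivity for prop}, itself proved via the elaborated hydra battle of Lemma~\ref{elaborated hydra} together with the strengthened proposition) played off against an exponent-sum count that yields $r\le0$. Finally, the base case $k=1$ is not inherited from \cite{DR}: it needs the change of basis $\bar a_0=a_0t$, $\bar a_1=a_1t$, under which $t^{-1}\bar a_1t=\bar a_0$ and $t^{-1}\bar a_0t=u\bar a_1\phi(v)$, re-expressing $\Psi_1$ as free-by-cyclic with $\Lambda_1$ as its free fibre; this is where the twisting by $u$ and $v$ is absorbed, and nothing in your outline plays that role.
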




It will be convenient to prove more.  In the special case where $w$ represents the identity, the following proposition tells us that there are  no non--trivial relations between  $a_0t$, \ldots, $a_kt, b_1, \ldots, b_l$ and so establishes  Proposition~\ref{freeness}.

\begin{prop} \label{a power of t prop}
If $w = w(a_0t, \ldots, a_kt, b_1, \ldots, b_l)$ represents an element of the subgroup $\langle t \rangle$ in $\Psi_k$, then $w$ freely equals the empty word. 
\end{prop}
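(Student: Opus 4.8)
The plan is to work with the canonical normal form for the free-by-cyclic group $\Psi_k = N \rtimes_\theta \Z$, where $N = F(a_0,\ldots,a_k,b_1,\ldots,b_l)$. Every element of $\Psi_k$ is written uniquely as $g\,t^m$ with $g \in N$ and $m \in \Z$, and an element lies in $\langle t\rangle$ precisely when its $N$-component $g$ is trivial in $N$. After freely reducing $w$ (which changes neither the element it represents nor the truth of the conclusion), it therefore suffices to prove the contrapositive: if $w = w(a_0t,\ldots,a_kt,b_1,\ldots,b_l)$ is freely reduced and non-empty, then its $N$-component is a non-trivial element of $N$.

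First I would make the $N$-component explicit. Reading $w = s_1\cdots s_n$ from left to right and pushing every $t$ to the right, conjugation by $t$ acts as a power of $\theta$, and one obtains $g = \theta^{-m_1}(p_1)\,\theta^{-m_2}(p_2)\cdots\theta^{-m_n}(p_n)$, where $m_i$ is the $t$-exponent of the prefix $s_1\cdots s_{i-1}$ and $p_i$ is the $N$-letter carried by $s_i$: namely $a_j^{\pm1}$ (up to a single application of $\theta$ when $s_i = (a_jt)^{\pm1}$) for the $a$-generators, and $b_j^{\pm1}$ for the $b$-generators. Because $w$ is freely reduced, no two consecutive syllables $\theta^{-m_i}(p_i)$ and $\theta^{-m_{i+1}}(p_{i+1})$ come from a cancelling pair $s_is_{i+1}$. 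The proposition thus reduces to showing that this product of $\theta$-twisted syllables does not freely collapse to the empty word.

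The combinatorial heart, and the step I expect to be the main obstacle, is a no-complete-cancellation estimate for these syllables, in the spirit of the hydra analysis of \cite{DR}. The idea is to control the freely reduced forms of $\theta^{\pm r}(a_i)$: since $\theta(a_i)=a_ia_{i-1}$ for $i>1$, $\theta(a_1)=a_0$ and $\theta(a_0)=ua_1v$, these words carry a rigid ``$a$-skeleton'' whose leading and trailing generators are predictable, so that the overlap between adjacent syllables is bounded and can never consume an entire syllable. The delicate features, absent from the original hydra battle, are the two-sided nature of the powers $\theta^{\pm r}$, the insertions $u,v$, and the twisting $\phi$ of the $b$-letters; all of these must be tracked carefully so that the cancellation between consecutive syllables is shown to be genuinely partial.

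To organise this I would separate the $a$-skeleton from the $b$-data. The retraction $N \to F(a_0,\ldots,a_k)$ killing the $b_j$ carries $\theta$ to the pure hydra automorphism $\theta'$ (with $\theta'(a_0)=a_1$), which reduces the skeletal part of the cancellation estimate to the setting of \cite{DR}; an induction on $k$, exploiting the $\theta$-invariant free factor $F(a_0,\ldots,a_{k-1},b_1,\ldots,b_l)$ and the corresponding subgroup $\Psi_{k-1}\le\Psi_k$, then organises the analysis by the top index. The residual situation, in which the $a$-skeleton cancels completely so that $w$ is carried by $b$-material lying between otherwise cancelling $a$-generators, is handled using that $\phi$ is an automorphism of $F(b_1,\ldots,b_l)$: the surviving contributions are powers of $\phi$ applied to $b$-words, which likewise cannot collapse for a freely reduced $w$. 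Assembling the skeletal and residual cases shows $g\neq 1$ in $N$, and hence that $w$ freely equals the empty word, completing the proof.
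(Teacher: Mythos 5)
Your reduction to showing that the $N$-component of a freely reduced non-empty $w$ is non-trivial is correct, and so is your twisted-syllable expression for that component. The fatal step is the claimed no-complete-cancellation estimate: that the freely reduced forms of $\theta^{\pm r}(a_i)$ have rigid ends, so that adjacent syllables overlap boundedly and no syllable is ever wholly consumed. This is false --- the hydra phenomenon that the paper exploits \emph{is} unbounded, whole-syllable cancellation. By Lemma~\ref{big triangle} there are words $\hat{u}_{k,n}$ in the generators $a_0t, \ldots, a_kt, b_1, \ldots, b_l$ of length up to $C^{\mathcal{H}_k(n)}n$ whose $N$-component is just ${a_k}^n$: in your decomposition a product of $\mathcal{H}_k(n)$-many syllables (many of them long) freely collapses to a word of length $n$, so almost all syllables are consumed entire. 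More pointedly, in a configuration $w_0 (a_kt)\, u \,(a_kt)^{-1} w_1$ the question becomes whether $\theta^{r+1}(a_k)\,\hat{u}$ can freely equal $a_k a_{k-1}$, i.e.\ whether a giant syllable can be eaten completely by the intervening material. For $r>0$ there genuinely exist words $u$ in the subgroup generators realising exactly this collapse --- run the elaborated hydra battle, as in the proof of Corollary~\ref{positivity for prop}, against $(a_k a_{k-1})^{-1}\theta^{r+1}(a_k)$, whose $a_i$ all occur positively --- so no local overlap bound, however carefully the $u,v$ insertions and the $\phi$-twisting of the $b$-letters are tracked, can rule it out. What excludes it for words representing elements of $\langle t \rangle$ is a global sign count, not a local estimate.

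That global count is how the paper argues. It inducts on $k$ (the base case $k=1$ is handled by the change of basis $\bar{a}_0 = a_0t$, $\bar{a}_1 = a_1t$, re-exhibiting $\Psi_1$ as free-by-cyclic with respect to the new basis --- a step your outline omits). In the induction step, since $\theta^{\pm 1}$ neither creates nor destroys ${a_k}^{\pm 1}$, there is an innermost cancelling pair $w_0 (a_kt)^{\pm 1} u (a_kt)^{\mp 1} w_1$ with $u$ over the lower-index generators. In the $(a_kt)^{-1} \cdots (a_kt)$ case the inner word $\hat{u}$ must be trivial, contradicting reducedness via the inductive hypothesis; in the $(a_kt) \cdots (a_kt)^{-1}$ case with $r \neq 0$, Corollary~\ref{positivity for prop} (available inductively, via Lemma~\ref{nesting}) forces every $(a_it)$ in $u^{-1}$ to occur with positive exponent, while $r$ is the exponent sum of the $(a_it)^{\pm 1}$ in $u$ --- a contradiction. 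Your retraction to the $a$-skeleton and induction on the top index are in the right spirit (the skeleton group is the hydra group of \cite{DR} enlarged by the swap $a_0 \leftrightarrow a_1$, and an early draft of this argument proceeded exactly that way), but without the positivity/exponent-sum mechanism the ``genuinely partial'' cancellation you hope to prove simply does not hold.
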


We begin  with an observation on how the groups $\Psi_k$ nest.

\begin{lemma} \label{nesting}
For $1\leq i \leq k$, the canonical homomorphism $\Psi_i \to \Psi_k$ is an inclusion.
\end{lemma}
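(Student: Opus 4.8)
The plan is to exhibit the nesting map concretely and verify it respects the semidirect product structure.  Recall that $\Psi_k = F(a_0,\ldots,a_k,b_1,\ldots,b_l)\rtimes_\theta\Z$, and the defining data (the words $u,v$ on the $b_j$ and the automorphism $\phi$ of $F(b_1,\ldots,b_l)$) are fixed independently of $k$.  The only place $k$ enters the definition of $\theta$ is through the chain $\theta(a_i)=a_ia_{i-1}$ for $1<i\le k$; the rules for $a_0,a_1$ and for the $b_j$ do not mention $k$.  So the natural map I would study is the one induced by the inclusion of generating sets $\{a_0,\ldots,a_i,b_1,\ldots,b_l\}\hookrightarrow\{a_0,\ldots,a_k,b_1,\ldots,b_l\}$ together with $t\mapsto t$.

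First I would check this is a well-defined homomorphism $\Psi_i\to\Psi_k$.  The free factor $F(a_0,\ldots,a_i,b_1,\ldots,b_l)$ includes into $F(a_0,\ldots,a_k,b_1,\ldots,b_l)$, and the key observation is that $\theta_k$ restricted to the subgroup generated by $a_0,\ldots,a_i,b_1,\ldots,b_l$ agrees with $\theta_i$: indeed $\theta_k(a_j)=\theta_i(a_j)$ for all $0\le j\le i$ (the formula for $a_0,a_1$ is identical, and for $1<j\le i$ both give $a_ja_{j-1}$), and $\theta_k(b_j)=\phi(b_j)=\theta_i(b_j)$.  Hence the restriction of $\theta_k$ preserves $F(a_0,\ldots,a_i,b_1,\ldots,b_l)$ and equals $\theta_i$ there, so conjugation by $t$ is compatible and the map on the semidirect products is a homomorphism.

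Next I would prove injectivity.  The cleanest route is to use that $F(a_0,\ldots,a_i,b_1,\ldots,b_l)$ is a free factor of $F(a_0,\ldots,a_k,b_1,\ldots,b_l)$, hence the inclusion of free groups is injective and the image is a $\theta_k$-invariant free factor.  An element of $\ker(\Psi_i\to\Psi_k)$ has the form $ft^m$ with $f$ in the smaller free group; its image $ft^m$ is trivial in $\Psi_k$ forces $m=0$ (projecting to the $\Z$ quotient) and then $f=1$ in the larger free group, whence $f=1$ in the smaller one by injectivity of the free-group inclusion.  I would state this via the commuting square relating the two semidirect-product projections to $\Z$ and the inclusion of free kernels.

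The step I expect to require the most care is verifying, cleanly and without circularity, that the restriction of $\theta_k$ to the sub-free-group really coincides with $\theta_i$ as an automorphism (not merely that the generator images land in the right subgroup).  This is where one must confirm that no $a_{i+1},\ldots,a_k$ ever appear when expanding $\theta_k$ on a word in $a_0,\ldots,a_i,b_1,\ldots,b_l$, which follows from the triangular shape of $\theta$ (each $\theta(a_j)$ involves only $a_j,a_{j-1}$ and, for $a_0$, the $b$-letters through $u,v$), but deserves an explicit remark.  Everything else is routine bookkeeping about semidirect products and free factors.
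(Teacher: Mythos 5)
Your proposal is correct and is essentially the paper's argument in expanded form: the paper's one-sentence proof --- that the free-by-cyclic normal forms (a reduced word on $a_0, \ldots, a_k, b_1, \ldots, b_l$ times a power of $t$) of an element of $\Psi_i$ and of its image in $\Psi_k$ coincide --- is exactly your two observations that $\theta_k$ restricts to $\theta_i$ on the $\theta_k$-invariant free factor $F(a_0,\ldots,a_i,b_1,\ldots,b_l)$ and that injectivity then follows from uniqueness of the normal form $ft^m$ via the $\Z$-projection. Your explicit remark on the triangular shape of $\theta$ is the right point to isolate, and your write-up simply makes precise what the paper leaves implicit.
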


\begin{proof}
The free--by--cyclic normal forms --- a reduced word on $a_0, \ldots, a_k, b_1, \ldots, b_l$ times a power of $t$ --- of an element of $\Psi_i$ and its image in $\Psi_k$ are the same. 
\end{proof}

We will  prove Proposition~\ref{a power of t prop} by induction, but first we give a corollary which will be useful in the induction step.  
We emphasise that when we say that $v(a_0t, \ldots, a_kt, b_1, \ldots, b_l)$ is  \emph{freely reduced} in the following, we mean that there are no $(a_it)^{\pm 1}(a_it)^{\mp 1}$ or ${b_j}^{\pm 1}{b_j}^{\mp 1}$ subwords.  

\begin{cor} \label{positivity for prop}
Suppose $v(a_0t, \ldots, a_kt, b_1, \ldots, b_l)$ is a freely reduced word equalling  $\hat{v}t^s$ in $\Psi_k$ where $s \in \Z$ and $\hat{v} = \hat{v}(a_0, \ldots, a_k, b_1, \ldots, b_l)$ is a word in which all the $a_i$ that occur have positive exponents.  Then all the $(a_it)$ in $v$ have positive exponents.   
\end{cor}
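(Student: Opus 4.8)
The plan is to deduce this from Proposition~\ref{a power of t prop} and the nesting Lemma~\ref{nesting} by putting $v$ into free--by--cyclic normal form with a single left--to--right sweep and watching the top generator $a_k$. Reading the letters of $v$ from the left while carrying the current $t$--exponent $s$, a letter $a_it$ appends $\theta^{-s}(a_i)$ to the accumulated free--group word and raises $s$ by $1$, a letter $(a_it)^{-1}$ appends $\theta^{1-s}(a_i)^{-1}$ and lowers $s$ by $1$, and a letter $b_j^{\pm1}$ appends $\theta^{-s}(b_j^{\pm1})$ and leaves $s$ fixed; the final accumulated word is $\hat v$. By Lemma~\ref{nesting} I may assume the largest index $k$ actually occurs in $v$ (otherwise I descend to the appropriate $\Psi_m$), and I argue by induction on $k$, disposing of the base case $k=1$ (where $a_0$ and $a_1$ merely interchange) by direct inspection.

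The structural input I would exploit is that $a_k$ occurs in $\theta(a_i)$ only when $i=k$, so that for every $n\in\Z$ one has $\theta^n(a_k)=a_k\,\xi_n$ with $\xi_n\in F_{k-1}:=F(a_0,\ldots,a_{k-1},b_1,\ldots,b_l)$, a subgroup on which $\theta$ restricts to an automorphism. Hence the $a_k$--letters of $\hat v$ are in bijection with the $a_k$--generators of $v$: each $a_kt$ contributes a single $a_k$, each $(a_kt)^{-1}$ a single $a_k^{-1}$, and every other contributed letter lies in $F_{k-1}$. Since by hypothesis $\hat v$ has only positive $a$--exponents, no $a_k^{-1}$ survives, so in forming $\hat v$ every $a_k^{-1}$--letter must cancel.

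The crux is to show this forces $v$ to contain no $(a_kt)^{-1}$. Suppose one occurs and choose an innermost cancelling pair of $a_k$--letters; the piece of $v$ strictly between the two generators carrying them is a freely reduced word $P$ on $a_0t,\ldots,a_{k-1}t,b_1,\ldots,b_l$. When the pair has the shape $(a_kt)^{-1}P(a_kt)$ the argument is clean: the flanking $a_k^{-1}$ and $a_k$ face each other directly, so in $F_{k-1}*\langle a_k\rangle$ they can only meet if the contribution of $P$ is trivial, i.e.\ $P$ represents an element of $\langle t\rangle$; then Proposition~\ref{a power of t prop} at level $k-1$ gives $P=\varepsilon$, exhibiting two mutually inverse adjacent generators and contradicting free reducedness. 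In the opposite shape $(a_kt)P(a_kt)^{-1}$ the two $a_k$--letters are separated not by $P$ alone but by $P$ decorated with the tail words $\xi_n$; cancellation then makes the whole subword $(a_kt)P(a_kt)^{-1}$ represent a power of $t$, and as a nonempty freely reduced subword of $v$ this contradicts Proposition~\ref{a power of t prop} itself. Either way no $(a_kt)^{-1}$ occurs, so every $a_k$--generator of $v$ is positive.

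With the top generators uniformly positive, the surviving $a_k$'s split $\hat v$ into $F_{k-1}$--syllables; positivity of $\hat v$ forces each syllable positive, and matching the generators of $v$ lying between consecutive $a_kt$'s to these syllables (via nesting into $\Psi_{k-1}$) should let me invoke the Corollary at level $k-1$ to conclude that the remaining generators occur only with the allowed exponents. I expect the main obstacle to be exactly the decorated $(a_kt)P(a_kt)^{-1}$ case and the companion transfer step: because the two flanking $a_k$--letters are insulated by the tail words $\xi_n$, ruling out their cancellation genuinely leans on Proposition~\ref{a power of t prop}, and the positivity hypothesis must be pushed onto the $\Lambda_{k-1}$--subwords through this same $\xi$--decoration. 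Care is therefore needed to arrange the joint induction on $k$ so that each appeal to the Proposition (and the descent to level $k-1$) is made only where it is already available, and so that deleting the positive $a_k$--generators does not silently absorb a lower negative generator.
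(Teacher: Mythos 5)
Your first stage --- ruling out $(a_kt)^{-1}$ --- is essentially sound: since $\theta^{\pm 1}$ creates no $a_k$ from lower generators, every $a_k^{\pm 1}$ in the accumulated normal form comes from an $(a_kt)^{\pm 1}$ of $v$; an innermost cancelling pair forces the intervening contribution to reduce to nothing; and your two shapes are handled correctly, the shape $(a_kt)P(a_kt)^{-1}$ by observing that the whole subword then lies in $\langle t\rangle$ and invoking Proposition~\ref{a power of t prop} at level $k$. (That same-level appeal is legitimate: the paper's own proof of the corollary also uses the proposition at level $k$, and only the level-$(k-1)$ corollary is consumed inside the proposition's induction step.) The genuine gap is the descent to the generators $a_it$ with $i<k$, precisely where you hedge (``should let me invoke''). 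To apply Corollary~\ref{positivity for prop} at level $k-1$ to a subword $Q_j$ between consecutive $(a_kt)$'s you need the $\Psi_{k-1}$--normal form $\hat{Q}_j$ of $Q_j$ to be positive in the $a_i$, and nothing you have gives this. The syllable of $\hat{v}$ attached to $Q_j$ is $\xi_{n_j}\,\theta^{-s_j}(\hat{Q}_j)$, not $\hat{Q}_j$, and neither decoration nor twisting respects positivity: for the decorations, $\theta^{-1}(a_i)=a_i\,\theta^{-1}(a_{i-1})^{-1}$ for $i\geq 2$ while $\theta^{-1}(a_1)=\theta^{-1}(u)^{-1}\,a_0\,\theta^{-1}(v)^{-1}$, so $\xi_n$ for $n<0$ contains negative $a$--letters, and cancellation between $\xi_{n_j}$ and the contribution of $Q_j$ can hide negative letters of the latter; and $\theta^{-1}$ itself does not preserve positivity-in-$a$, so positivity of the reduced syllable does not transfer back through $\theta^{-s_j}$ to $\hat{Q}_j$. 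Worse, the needed hypothesis can simply fail: a word positive in the generators need not have positive-in-$a$ normal form (e.g.\ $(a_1t)(a_2t)$ has normal form $a_1a_2\,\theta^{-1}(v)\,a_0^{-1}\,\theta^{-1}(u)\,t^2$), so the syllable-matching strategy is not merely unproven but structurally unsuited --- you would be trying to verify a hypothesis that the decorations and twists are exactly what obscure.

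The paper avoids all of this with a short uniqueness-of-representative argument that you could adopt wholesale: play the elaborated hydra battle (the construction before Lemma~\ref{elaborated hydra}) against the positive-in-$a$ word $\hat{v}$ to produce a word $v'=v'(a_0t,\ldots,a_kt,b_1,\ldots,b_l)$ with all $(a_it)$ positive and $v'=\hat{v}t^{s'}$ in $\Psi_k$; then $v^{-1}v'\in\langle t\rangle$, so Proposition~\ref{a power of t prop} gives $v \FreeEq v'$, and the signs of the $(a_it)$ in $v$ are those in $v'$. Once you permit yourself the proposition at level $k$ --- which your crux step already does --- this single global argument replaces both the innermost-pair analysis and the problematic descent, with no induction on $k$ at all. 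I recommend abandoning the syllable matching in favour of it.
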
 

\begin{proof}
The hydra battle described prior to Lemma~\ref{elaborated hydra}, played out against $\hat{v}(a_0, \ldots, a_k, b_1, \ldots, b_l)$ gives a  word $v' = v'(a_0t, \ldots, a_kt, b_1, \ldots, b_l)$ and an integer $s'$ such that  $v'= \hat{v}t^{s'}$ in $\Psi_k$.  Moreover, the exponents of all the $(a_it)$ in $v'$  are positive.  Now,   $v^{-1}v' \in \langle t \rangle$ since $\hat{v} = vt^{-s} = v' t^{-s'}$, and so  $v$ and $v'$ are freely equal by Proposition~\ref{a power of t prop}.  Therefore the exponents of all the $(a_it)$ in $v$ are positive.   
\end{proof}

\begin{proof}[Proof of Proposition~\ref{a power of t prop}.]
We induct on $k$.   For the base case of $k=1$,  notice that defining $\bar{a}_0 := a_0 t$ and $\bar{a}_1 := a_1 t$, we can transform the presentation  
$$\langle \,  a_0, a_1, b_1, \ldots, b_l,  t \,  \mid \, t^{-1} {a_0} t=ua_1v, \  t^{-1} {a_1} t=a_0, \ t^{-1}{b_j} t=\phi(b_j) \  \forall j    \, \rangle$$
for $\Psi_1$ to 
$$\langle \,  \bar{a}_0, \bar{a}_1, b_1, \ldots, b_l,  t \,  \mid \, t^{-1} {\bar{a}_0} t=u\bar{a}_1\phi(v), \  t^{-1} {\bar{a}_1} t=\bar{a}_0, \ t^{-1}{b_j} t=\phi(b_j) \  \forall j    \, \rangle,$$ which is an alternative means of expressing $\Psi_1$ as a free--by--cyclic group from which  
the result  is evident.

 For the induction step, we consider a freely reduced word $w = w(a_0t, \ldots, a_kt, b_1, \ldots, b_l)$ representing an element of  $\langle t \rangle$ in $\Psi_k$ where $k \geq 2$.    If  no $(a_kt)^{\pm 1}$ are present in $w$ we can deduce from the induction hypothesis and Lemma~\ref{nesting} that $w$ freely reduces to the empty word.  For the remainder of our proof we suppose there are $(a_kt)^{\pm 1}$ present, and we seek a contradiction.  
 
 Consider shuffling  the $t^{\pm 1}$ to the start of $w$ using the defining relations --- replacing each $a_i$ and $b_j$  passed by a $t^{\pm 1}$ with $\theta^{\pm 1}(a_i)$ and $\theta^{\pm 1}(b_j)$, respectively.  The result will be a power of $t$ times a word on $a_0, \ldots, a_k, b_1, \ldots, b_l$ which freely reduces to the empty word.  Such is $\theta$, no $a_k$ are created or destroyed in this process of shuffling the $t^{\pm 1}$.  So there is some expression  $w_0{(a_kt)}^{\pm 1} u {(a_kt)}^{\mp 1}w_1$  for $w$ such that $u = u(a_0t, \ldots, a_{k-1}t,b_1, \ldots, b_l)$ and the  ${a_k}^{\pm 1}$ and ${a_k}^{\mp 1}$ in the ${(a_kt)}^{\pm 1}$ and ${(a_kt)}^{\mp 1}$  buttressing $u$  cancel after the shuffling and free reduction.   

We will address first the case  $w = w_0{(a_k t)}^{- 1} u (a_kt) w_1$.  Break down the shuffling process by first shuffling the $t^{\pm 1}$ out of  $w_0$, $u$ and $w_1$, and then carrying the resulting powers to the front of the word: 
\begin{align*}
w  & \ = \ w_0 \, (a_kt)^{-1} \, u \, (a_k t) \,  w_1    
    \ \to \  t^{r_0} \,  \hat{w}_0 \, (a_kt)^{-1} \, t^{r} \, \hat{u} \, (a_kt) \, t^{r_1} \, \hat{w}_1   \\ 
   & \ \to \   t^{r_0+r +r_1}  \, \theta^{r+r_1} \, (\hat{w}_0) \, \theta^{r+r_1+1}({a_k}^{- 1}) \,  \theta^{r_1+1}(\hat{u}) \, \theta^{r_1+1}(a_k) \, \hat{w}_1 
\end{align*}   
where $r_0, r, r_1 \in \Z$ and  
\begin{align*}
\hat{w}_0 & \  = \ \hat{w}_0(a_0, \ldots, a_k, b_1, \ldots, b_l), \\ 
\hat{u} & \  = \ \hat{u}(a_0, \ldots, a_{k-1}, b_1, \ldots, b_l), \\ 
\hat{w}_1 & \ = \ \hat{w}_1(a_0, \ldots, a_k, b_1, \ldots, b_l)
\end{align*} 
are words such that $t^{r_0} \hat{w}_0 = w_0$, $t^{r} \hat{u} = u$ and $t^{r_1} \hat{w}_1 = w_1$ in $\Psi_k$.   
When we expand $\theta^{r+r_1}({a_k}^{- 1})$ and $\theta^{r_1+1}({a_k}) $ as words on $a_0, \ldots, a_k$, the former ends with an ${a_k}^{- 1}$ which must cancel with the ${a_k}$ at the start of the latter.  So $\theta^{r_1+1}(\hat{u})$, and therefore $\hat{u}$,  freely equal  the empty word.  So $u$  represents an element of $\langle t \rangle$ and, by induction hypothesis, freely reduces to the empty word, contrary to the initial assumption that  $w(a_0t, \ldots a_kt, b_1, \ldots, b_l)$ is reduced.  
  
In the case $w = w_0 (a_kt) u (a_kt)^{- 1}w_1$, the shuffling process is
\begin{align*}
w  & \ = \ w_0 \, (a_kt) \, u \, (a_k t)^{-1} \,  w_1    
    \ \to \  t^{r_0} \,  \hat{w}_0 \, (a_kt) \, t^{r} \, \hat{u} \, (a_kt)^{-1} \, t^{r_1} \, \hat{w}_1   \\ 
   & \ \to \   t^{r_0+r +r_1}  \, \theta^{r+r_1} \, (\hat{w}_0) \, \theta^{r+r_1}({a_k}) \,  \theta^{r_1-1}(\hat{u}) \, \theta^{r_1}({a_k}^{- 1}) \, \hat{w}_1 
\end{align*}   
where $t^{r_0} \hat{w}_0 = w_0$, $t^{r} \hat{u} = u$ and $t^{r_1} \hat{w}_1 = w_1$ in $\Psi_k$, as before. 
The first and last letters  of   $\theta^{r+r_1}({a_k}) \,  \theta^{r_1-1}(\hat{u}) \, \theta^{r_1}({a_k}^{- 1})$  are $a_k$ and ${a_k}^{-1}$ which cancel, so this subword must freely reduce to the empty word.  So $\theta^{r}({a_k}) \, \theta^{-1}(\hat{u}) \,  {a_k}^{- 1}$ also freely reduces to the empty word --- that is, $\theta^{r+1}(a_k) \, \hat{u}$ freely equals $a_k a_{k-1}$.  

If $r=0$ then this says that  $\hat{u}$ freely equals the empty word and, as before, we have a contradiction.  Suppose $r >0$.  Then $\hat{u}^{-1} = (a_k a_{k-1})^{-1} \theta^{r+1}(a_k)$ would be a positive word on $a_0, \ldots, a_{k-1}$ were we to remove all the ${b_1}^{\pm 1}, \ldots, {b_l}^{\pm 1}$ it contains.  
So, as $\hat{u}^{-1}t^{-r} = u^{-1}$, Corollary~\ref{positivity for prop} applies and tells us that $u^{-1}$ would be a positive word were we to remove all the ${b_1}^{\pm 1}, \ldots, {b_l}^{\pm 1}$ it contains.  But $r$ is the exponent sum of the  $(a_0t)^{\pm 1}, \ldots, (a_{k-1}t)^{\pm 1}$ in $u$, and so we deduce the contradiction $r \leq 0$.  Finally we note that the case $r<0$ also leads to a contradiction because if we replace  $w$ by  $w^{-1}$ it becomes the case  $r>0$. 
\end{proof}

\subsection{Conclusion} We deduce from Proposition~\ref{a power of t prop} that the word posited in Proposition~\ref{lower bound on distortion}  is the \emph{unique} reduced word on $a_0t,   \ldots, a_kt, b_1, \ldots, b_l$ that equals  ${a_k}^n a_2  ta_1 {a_2}^{-1} {a_k}^{-n}$  in $\Psi_k$.  This establishes that  $\Dist_{\Lambda_k}^{\Psi_k} \succeq \H_k$ for all $k \geq 2$.  So, by Proposition~1.2 in \cite{DR}, which  says that  $\H_k \simeq A_k$ for all $k \geq 1$, we have $\Dist_{\Lambda_k}^{\Psi_k} \succeq A_k$ for all $k \geq 2$.  Added to Proposition~\ref{freeness}, this completes  the proof of Proposition~\ref{distortion and freeness prop}.

Proposition~\ref{distortion and freeness prop} applies to the subgroup $$\langle \, a_0t, \ldots, a_kt, \, b_1, \ldots, b_8, \, c_1, \ldots, c_8, \, d \, \rangle$$ of $\Gamma_k$ (presented as $Q_k$ of Section~\ref{our examples}) and so, as we established $\Gamma_k$ to be hyperbolic in Section~\ref{hyperbolicity}, Theorem~\ref{Main} is proved.

 






\bibliographystyle{plain}
\bibliography{$HOME/Dropbox/Bibliographies/bibli}

\def\cprime{$'$}
\begin{thebibliography}{10}

\bibitem{AO}
G.~Arzhantseva and D.~Osin.
\newblock Solvable groups with polynomial {D}ehn functions.
\newblock {\em Trans. Amer. Math. Soc.}, 354(8):3329--3348, 2002.

\bibitem{BaBr}
J.~Barnard and N.~Brady.
\newblock Distortion of surface groups in {CAT}(0) free--by--cyclic groups.
\newblock {\em Geom. Dedicata}, 120:119--139, 2006.

\bibitem{BBD}
J.~Barnard, N.~Brady, and P.~Dani.
\newblock Super--exponential distortion of subgroups of {CAT}({$-1$}) groups.
\newblock {\em Algebr. Geom. Topol.}, 7:301--308, 2007.

\bibitem{Bridson11}
M.~R. Bridson.
\newblock On the existence of flat planes in spaces of nonpositive curvature.
\newblock {\em Proc. Amer. Math. Soc.}, 123(1):223--235, 1995.

\bibitem{BrH}
M.~R. Bridson and A.~Haefliger.
\newblock {\em Metric Spaces of Non--positive Curvature}.
\newblock Number 319 in Grundlehren der mathematischen Wissenschaften. Springer
  Verlag, 1999.

\bibitem{DR}
W.~Dison and T.~R. Riley.
\newblock Hydra groups.
\newblock \texttt{arXiv:1002.1945}.

\bibitem{Farb}
B.~Farb.
\newblock The extrinsic geometry of subgroups and the generalised word problem.
\newblock {\em Proc. London Math. Soc. (3)}, 68(3):577--593, 1994.

\bibitem{Gromov4}
M.~Gromov.
\newblock Hyperbolic groups.
\newblock In S.~M. Gersten, editor, {\em Essays in group theory}, volume~8 of
  {\em MSRI publications}, pages 75--263. Springer--Verlag, 1987.

\bibitem{Gromov}
M.~Gromov.
\newblock Asymptotic invariants of infinite groups.
\newblock In G.~Niblo and M.~Roller, editors, {\em Geometric group theory II},
  number 182 in LMS lecture notes. Camb. Univ. Press, 1993.

\bibitem{Howie}
J.~Howie.
\newblock On the asphericity of ribbon disc complements.
\newblock {\em Trans. Amer. Math. Soc.}, 289(1):281--302, 1985.

\bibitem{MeMu}
T.~Mecham and A.~Mukherjee.
\newblock Hyperbolic groups which fiber in infinitely many ways.
\newblock {\em Algebr. Geom. Topol.}, 9(4):2101--2120, 2009.

\bibitem{Mitra}
M.~Mitra.
\newblock {C}annon--{T}hurston maps for trees of hyperbolic metric spaces.
\newblock {\em J. Diff. Geom.}, 48(1):135--164, 1998.

\bibitem{PittetThesis}
Ch. Pittet.
\newblock {\em G\'eom\'etrie des groupes, in\'egalit\'es isop\'erim\'etriques
  de dimension 2 et distorsions}.
\newblock PhD thesis, Universit\'e de Gen\`eve, 1992.

\bibitem{Rose}
H.~E. Rose.
\newblock {\em Subrecursion: functions and hierarchies}, volume~9 of {\em
  Oxford Logic Guides}.
\newblock The Clarendon Press Oxford University Press, New York, 1984.

\end{thebibliography}

\ni
\small{\textsc{Noel Brady} \rule{0mm}{6mm} \\
Department of Mathematics, Physical Sciences Center, 601 Elm Ave, University of Oklahoma,  Norman,   OK      73019, USA \\ \texttt{nbrady@math.ou.edu}, \
\href{http://aftermath.math.ou.edu/~nbrady/}{http://aftermath.math.ou.edu/$\sim$nbrady/}

\ni
\small{\textsc{Will Dison} \rule{0mm}{6mm} \\
Bank of England, Threadneedle Street, London, EC2R 8AH, UK \\ \texttt{william.dison@gmail.com}, \
\href{http://www.maths.bris.ac.uk/~mawjd/}{http://www.maths.bris.ac.uk/$\sim$mawjd/}

\ni  \textsc{Timothy R.\ Riley} \rule{0mm}{6mm} \\
Department of Mathematics, 310 Malott Hall,  Cornell University, Ithaca, NY 14853, USA \\ \texttt{tim.riley@math.cornell.edu}, \
\href{http://www.math.cornell.edu/~riley/}{http://www.math.cornell.edu/$\sim$riley/}}
\end{document}